\centering\addtocounter{figure}{1}
\newtheorem{cor}{Corollary}[section]
\newtheorem{proposition}{Proposition}[section]
\numberwithin{equation}{section}
\numberwithin{figure}{section}
\begin{document}

\def\spacingset#1{\renewcommand{\baselinestretch}%
{#1}\small\normalsize} \spacingset{1}

\fontsize{11}{14pt plus.8pt minus .6pt}\selectfont \vspace{0.8pc}
\centerline{\Large \textbf{Karhunen-Lo\`eve Expansions for Axially Symmetric Gaussian}}
\vspace{4pt} \centerline{\Large \textbf{Processes: Modeling Strategies and $L^2$ Approximations}}
\vspace{.8cm} 

\begin{center}
Alfredo Alegr\'ia\footnoteB{\baselineskip=10pt  Departamento de Matem\'atica, Universidad T{\'e}cnica Federico Santa Mar{\'i}a, Chile. }$^{,}$\footnoteA{Corresponding author. Email: alfredo.alegria@usm.cl}  
    and    Francisco Cuevas-Pacheco\footnoteB{\baselineskip=10pt  Department of Mathematics, Universit\'e du Qu\'ebec \`a Montreal, Canada.}
\end{center}

\vspace{1cm}

\begin{abstract}
\noindent Axially symmetric processes on spheres, for which the second-order dependency structure may substantially vary with shifts in latitude, are a prominent alternative to model the spatial uncertainty of natural variables located over large portions of the Earth. In this paper, we focus on Karhunen-Lo\`eve expansions of axially symmetric Gaussian processes. First, we investigate a parametric family of Karhunen-Lo\`eve coefficients that allows for versatile spatial covariance functions. The isotropy as well as the longitudinal independence can be obtained as limit cases of our proposal. Second, we introduce a strategy to render any longitudinally reversible process irreversible, which means that its covariance function could admit certain types of asymmetries along longitudes. Then, finitely truncated Karhunen-Lo\`eve expansions are used to approximate axially symmetric processes. For such approximations, bounds for the $L^2$-error are provided. Numerical experiments are conducted to illustrate our findings.\\

\noindent {\it Keywords:}   Associated Legendre polynomials; Covariance functions; Isotropy; Great-circle distance; Longitudinally independent; Longitudinally reversible; Spherical harmonics

\end{abstract}

\section{Introduction}

Stochastic processes on spheres provide a valuable mathematical framework to capture the spatial uncertainty of geophysical processes located over large portions of the Earth \citep{marinucci2011random}. Global data are typically characterized by dissimilar behaviors in different parts of the world, which can be attributed to diverse factors, including wind directions and teleconnections. As a result, the search for sophisticated models for globally dependent data has attracted growing interest from statisticians in recent decades. We refer the reader to \cite{jeong2017spherical} and \cite{porcu2018modeling} for thorough reviews about this topic.

The assumption of isotropy, commonly used in spatial data analysis, implies that the statistical properties of the process do not vary for different points on the surface of a sphere. The literature on isotropic processes is substantial. For instance, the design of parametric families of covariance functions has been addressed by \cite{gneiting2013strictly}, \cite{guinness2016isotropic}, \cite{peron2018admissible} and \cite{alegria2018family}. \cite{lang2015isotropic}, \cite{hansen2015gaussian} and \cite{de2018regularity} discussed the regularity properties of Gaussian processes on spheres and hyperspheres. Computationally efficient simulation algorithms have been proposed by \cite{creasey2018fast}, \cite{cuevas2020fast}, \cite{lantuejoul2019}, \cite{emery2019turning}, \cite{emery2019simulating} and \cite{alegria2020turning}. Although isotropy considerably simplifies the modeling of processes on spheres, it is generally a questionable assumption, as it does not allow for spatially varying dependencies \citep{stein2007spatial}.

In recent years, there has been renewed interest in a more flexible class of processes, referred to as axially symmetric processes \citep{jones1963stochastic}, for which the spatial dependency is stationary with respect to longitude but may substantially change with shifts in latitude. While \cite{hitczenko2012some}, \cite{huang2012simplified} and \cite{bissiri2020} studied several theoretical aspects of axially symmetric processes, \cite{vanlengenberg2019data} and \cite{Emery2019} paid particular attention to the formulation of fast and efficient simulation algorithms. Other authors, including \cite{stein2007spatial}, \cite{jun2008nonstationary}, \cite{castruccio2014beyond}, \cite{castruccio2016assessing}, and \cite{porcu2019axially}, have illustrated the relevance of axially symmetric processes in environmental and climatological applications.

The Karhunen-Lo\`eve expansion in terms of spherical harmonic functions is a convenient mathematical framework to analyze Gaussian processes on spheres \citep{jones1963stochastic,marinucci2011random}. In the axially symmetric scenario, Karhunen-Lo\`eve expansions have received little attention, with the works of \cite{stein2007spatial}, where axially symmetric models are fitted to total column ozone data, and \cite{hitczenko2012some}, who concentrated on models based on differential operators, being notable exceptions. This paper is devoted to the study of axially symmetric Gaussian processes through their Karhunen-Lo\`eve expansions. The main contributions of this work are listed below:
\begin{enumerate}

\item  We investigate a parametric family of Karhunen-Lo\`eve coefficients that allows for flexible second-order dependency structures. Our proposal permits us to gradually go from processes that are constant along the parallels of latitude (longitudinal independence) to processes whose finite-dimensional distributions are invariant under spatial rotations (isotropy).

\item We propose a general and simple strategy to render any longitudinally reversible model irreversible, i.e.; our approach allows covariance functions to be built with certain types of asymmetries along longitudes.

\item We focus on the approximation of axially symmetric processes through finitely truncated Karhunen-Lo\`eve expansions. We provide a theoretical bound for the $L^2$-error associated with this approximation. Such an approximation suggests a natural simulation method, which is examined through numerical experiments.

\end{enumerate}

The article is organized as follows. Section \ref{preliminaries} provides background material on axially symmetric processes and their Karhunen-Lo\`eve expansions. Section \ref{main} contains the main results of this work. Specifically, we propose a parametric family of Karhunen-Lo\`eve coefficients that connects the isotropic case and the case of longitudinal independence. We present a strategy for building longitudinally irreversible processes. The bounds for the $L^2$-error of truncated Karhunen-Lo\`eve expansions are derived as well. In Section \ref{simulacion}, our findings are illustrated through numerical experiments. Section \ref{discussion} concludes the paper with a discussion.

\section{Preliminaries}
\label{preliminaries}

\subsection{Spherical Harmonic Functions}

The aim of this section is to introduce preliminary material about spherical harmonic functions. We denote the latitude and longitude coordinates of a spatial point on $\mathbb{S}^2 = \{\bm{x}\in\mathbb{R}^3: \|\bm{x}\| = 1\}$ by $L \in [0,\pi]$ and $\ell\in [0,2\pi)$, respectively. For two locations on $\mathbb{S}^2$, with coordinates $(L_1,\ell_1)$ and $(L_2,\ell_2)$, the great circle distance between them is given by
$$\text{d}_{\text{GC}}\left(L_1,L_2,\Delta \ell\right) = 2 \arcsin\left\{  \left[ \sin^2\left( \frac{L_1-L_2}{2}\right) + \sin L_1 \sin L_2 \sin^2\left( \frac{\Delta \ell}{2} \right) \right] ^{1/2}\right\},$$
where $\Delta \ell = \ell_1-\ell_2$. This metric represents the length of the shortest arc joining two spherical locations, so it is always true that $\text{d}_{\text{GC}}\left(L_1,L_2,\Delta \ell\right)  \in [0,\pi]$.

Spherical harmonic functions, denoted by $\mathcal{Y}_{nm}(L,\ell)$, for $n\in\mathbb{N}_0$ and $m\in\{-n,\hdots,n\}$,
form an orthogonal basis of the Hilbert space of complex-valued square integrable functions on $\mathbb{S}^2$. When $n\in\mathbb{N}_0$ and $m\in\{0,\hdots,n\}$, we have
$$\mathcal{Y}_{nm} (L,\ell)  =  \sqrt{ \frac{2n+1}{4\pi} \frac{(n-m)!}{(n+m)!} } P_{nm}(\cos L) \exp(\imath m \ell),$$
for $(L,\ell)\in [0,\pi]\times [0,2\pi)$, where $P_{nm}$ is the associated Legendre polynomial \citep{abramowitz1988handbook} and $\imath\in\mathbb{C}$ is the complex unit. For $n\in\mathbb{N}$ and $m\in\{-n,\hdots,-1\}$, spherical harmonic functions are instead given by
$$\mathcal{Y}_{nm} (L,\ell)  =    (-1)^m \overline{\mathcal{Y}_{n-m}} (L,\ell),$$
where $\overline{c}$ denotes the complex conjugate of $c$. The addition theorem is a mathematical identity of great importance when dealing with spherical harmonic functions (see, e.g., \citealp{marinucci2011random}), which states that
\begin{equation*}
P_n\left(\cos\text{d}_{\text{GC}}\left(L_1,L_2,\Delta \ell\right)  \right)   =    \frac{4\pi}{2n+1}   \sum_{m=-n}^n \mathcal{Y}_{nm}(L_1,\ell_1)  \overline{\mathcal{Y}_{nm}}(L_2,\ell_2),   
\end{equation*}
or equivalently,
\begin{equation}
\label{addition_thm} 
P_n\left(\cos \text{d}_{\text{GC}}\left(L_1,L_2,\Delta \ell\right)  \right)   =   P_n(\cos L_1) P_n(\cos L_2)  + 2 \sum_{m=1}^n \frac{(n-m)!}{(n+m)!} \cos(m \Delta \ell) P_{nm}(\cos L_1) P_{nm}(\cos L_2),
\end{equation}
with $P_n = P_{n0}$ standing for the Legendre polynomial of degree $n$ \citep{abramowitz1988handbook}. A more comprehensive discussion on spherical harmonic functions and Fourier analysis on $\mathbb{S}^2$ can be found in \cite{marinucci2011random}.

\subsection{Axially Symmetric Processes}

A zero-mean real-valued Gaussian process, $Z(L,\ell)$, which is indexed by latitude $L\in[0,\pi]$ and longitude $\ell\in[0,2\pi)$, with finite second-order moments, defined on a probability space $(\Omega,\mathcal{F},P)$, is referred to as an axially symmetric process \citep{jones1963stochastic, stein2007spatial} if its covariance function can be written as

\begin{equation}
\label{cov_as}
\text{cov}\{Z(L_1,\ell_1), Z(L_2,\ell_2)\} = C(L_1,L_2,\Delta \ell),
\end{equation}
for some function $C: [0,\pi]^2\times [-2\pi,2\pi] \rightarrow \mathbb{R}$. The covariance function of an axially symmetric process is stationary with respect to longitude and may have heterogeneous behaviours along latitudes. Two important particular cases are discussed.
\begin{description}
\item \textbf{Isotropy.} When the covariance function of the process is a function of locations $(L_1,\ell_1)$ and $(L_2,\ell_2)$ only through their great-circle distance, $\text{d}_{\text{GC}}\left(L_1,L_2,\Delta \ell\right)$, the process is called isotropic. It is clear that (\ref{cov_as}) includes an isotropic structure as a special case. The finite-dimensional distributions of isotropic Gaussian processes are invariant under the group of rotations on $\mathbb{S}^2$ \citep{marinucci2011random}.

\item \textbf{Longitudinal Independence.} Another limit scenario of axial symmetry is the longitudinal independence presented by \cite{Emery2019}, which means that $C(L_1,L_2,\Delta \ell)$ in (\ref{cov_as}) does not depend on $\Delta \ell$. \cite{Emery2019} showed that longitudinally independent processes are constant along the parallels of latitude and argued that they can be useful in structural geology and geotechnics.
\end{description}
The presence or absence of longitudinal symmetry in the covariance function (\ref{cov_as}) provides a classification for axially symmetric processes. Following \cite{stein2007spatial}, an axially symmetric process is called longitudinally reversible if $$C(L_1,L_2,\Delta \ell) = C(L_1,L_2,-\Delta \ell),$$
for every $(L_1,L_2,\Delta \ell)\in [0,\pi]^2 \times [-2\pi,2\pi]$. However, in general, $C$ need not be symmetric in $\Delta \ell$, in which case the process is said to be longitudinally irreversible.

\subsection{Karhunen-Lo\`eve Expansions of Axially Symmetric Processes}

An approach developed by \cite{jones1963stochastic} states that axially symmetric Gaussian processes on $\mathbb{S}^2$ admit Karhunen-Lo\`eve expansions in terms of spherical harmonic functions. Consider the expansion
\begin{equation}
\label{proceso0}
Z(L,\ell) = \sum_{n=0}^{\infty}  \sum_{m=-n }^n   c_{nm} \mathcal{Y}_{nm}(L,\ell),
\end{equation}
where $c_{nm}$ are zero-mean Gaussian random variables. The convergence in (\ref{proceso0}) holds in the $L^2$ sense (the same comment applies for similar series throughout the manuscript). To obtain a real-valued process, the condition $c_{nm} = (-1)^m \overline{c_{n-m}}$ must be imposed, where in particular we have that $c_{n0}$ is a real-valued random variable. Under this symmetry condition and the explicit expressions for spherical harmonic functions, it is evident (\ref{proceso0}) reduces to
\begin{equation}
\label{proceso}
Z(L,\ell)  =  \sum_{n=0}^{\infty}   a_{n0}   \widetilde{P}_{n0}(\cos L)  + 2 \sum_{n=1}^\infty \sum_{m=1}^n   \bigg\{ a_{nm} \cos(m\ell)+ b_{nm} \sin(m\ell) \bigg\}     \widetilde{P}_{nm}(\cos L),
\end{equation}
where $a_{nm}$ and $b_{nm}$ are zero-mean Gaussian random variables, representing real and imaginary parts of $c_{nm}$, respectively, and $$\widetilde{P}_{nm} = \sqrt{ \frac{2n+1}{4\pi} \frac{(n-m)!}{(n+m)!} }   P_{nm}.$$
To ensure that $Z(L,\ell)$ is axially symmetric, the (bi) sequences of coefficients, $\{a_{nm}\}$ and $\{b_{nm}\}$, must be uncorrelated in the index $m$. Specifically, \cite{jones1963stochastic} considered the following conditions:
\begin{description}
\item[{(C1)}] $\text{cov}\{ a_{n0}, a_{n'0} \} = \text{cov}\{ b_{n0}, b_{n'0} \} =   f_{0}(n,n')$, for all $n,n'\geq 0$.
\item[{(C2)}] $\text{cov}\{ a_{nm}, a_{n'm'} \} = \text{cov}\{ b_{nm}, b_{n'm'} \} =  \delta_{m}^{m'} f_{m}(n,n')/2$, for all $n,n'\geq m$, with $m>0$.
\item[{(C3)}] $\text{cov}\{ a_{nm}, b_{n'm'} \} =- \text{cov}\{b_{nm},a_{n'm'}\} =  \delta_{m}^{m'} g_{m}(n,n')/2$, for all $n,n'\geq m$, with $m>0$.
\end{description}
Here, $\delta_{m}^{m'}$ denotes the Kronecker delta, $f_m(n,n')$ captures the covariance function of each individual sequence, and $g_m(n,n')$ characterizes the cross-covariance function between these sequences. Following \cite{jones1963stochastic}, the cross-covariance function satisfies the identity $g_m(n,n') = - g_m(n',n)$, and so $g_m(n,n) = 0$. Defining
$$\bm{\upsilon}_m =(a_{n_1m},  a_{n_2m}, \hdots, b_{n_1m}, b_{n_2m}, \hdots)^\top,$$
for integers $n_1,n_2,\hdots$ being greater than or equal to $m$, we observe that the covariance matrix of $\bm{\upsilon}_m$ can be written as
\begin{equation}
\label{big-matrix}
 \bm{\Gamma}_m= \begin{pmatrix}     
 \bm{F}_m & \bm{G}_m\\
 \bm{G}_m^\top & \bm{F}_m,
\end{pmatrix},
\end{equation}
where $\bm{F}_m$ and $\bm{G}_m$ are matrices with entries $f_m(n_i,n_j)$ and $g_m(n_i,n_j)$, respectively. Hence, $\bm{F}_m$ is a positive semidefinite matrix, whereas $\bm{G}_m$ is an antisymmetric matrix, in such a way that the block matrix (\ref{big-matrix}) is always positive semidefinite. The covariance function of $Z(L,\ell)$, taking into account conditions (C1)-(C3), is given by
\begin{multline}
\label{cov_as2}
C(L_1,L_2,\Delta \ell)     =   \sum_{n,n'=0}^\infty  f_0(n,n') \widetilde{P}_{n0}(\cos L_1) \widetilde{P}_{n'0}(\cos L_2)    \\
+  2 \sum_{m=1}^\infty  \sum_{n,n'=m}^\infty  \bigg\{   f_m(n,n') \cos(m\Delta \ell)  + g_m(n,n') \sin(m\Delta \ell)  \bigg\}   \widetilde{P}_{nm}(\cos L_1) \widetilde{P}_{n'm}(\cos L_2),
\end{multline}
where the summability condition
\begin{description}
\item[{(C4)}] $ \displaystyle \sum_{n,n'=0}^\infty  f_0(n,n') \widetilde{P}_{n0}(\cos L) \widetilde{P}_{n'0}(\cos L)
+  2 \sum_{m=1}^\infty  \sum_{n,n'=m}^\infty     f_m(n,n') \widetilde{P}_{nm}(\cos L) \widetilde{P}_{n'm}(\cos L) < \infty,$
\end{description}
for all $L\in[0,\pi]$, ensures that $Z(L,\ell)$ is a well-defined second-order process. Note that a zero-mean axially symmetric Gaussian process is completely characterized by $f_m(n,n')$ and $g_m(n,n')$.

Before concluding this section, the following comments are provided.
\begin{itemize}

\item[$-$] When $f_{m}(n,n') = \delta_n^{n'} \xi_n$, for some sequence $\{\xi_n: n\in\mathbb{N}_0\}$ of nonnegative real numbers, and $g_m(n,n')$ is identically equal to zero, we obtain an isotropic process. In this special case, the addition theorem for spherical harmonic functions implies that
\begin{equation}
\label{schoenberg1}
C(L_1,L_2,\Delta \ell)    =   \sum_{n=0}^\infty   { \frac{\xi_n(2n+1)}{4\pi} }  P_n\left(\cos  \text{d}_{\text{GC}}\left( L_1,L_2,\Delta \ell\right) \right),
\end{equation}
where the condition for finite variance is
\begin{equation}
\label{sum_cond}
{  \sum_{n=0}^\infty \xi_n(2n+1) < \infty.  }
 \end{equation}
Condition (\ref{sum_cond}) together with the inequality $|P_n(t)| \leq 1$, for $|t| \leq 1$, imply the uniform convergence of (\ref{schoenberg1}) and ensure its continuity (this is a consequence of Weierstrass M-test). Equation (\ref{schoenberg1}) is the characterization of any continuous covariance function associated with an isotropic process on $\mathbb{S}^2$ (\citealp{schoenberg1942}).

\item[$-$] Suppose now that the coefficients in (\ref{cov_as2}) vanish for $m>0$; then, the covariance function has a longitudinally independent structure because under this choice, we eliminate the terms depending on $\Delta \ell$.

\item[$-$]  When $g_m(n,n')$ is different from zero, we obtain a longitudinally irreversible process. Thus, the function $g_m(n,n')$ is responsible for the asymmetry of the covariance function along longitudes.

\end{itemize}

\section{Main Results}

\label{main}

\subsection{A Bridge Between Isotropy and Longitudinal Independence}

In this section, inspired by the work of \cite{Emery2019}, we describe how to obtain a unified representation of the limit cases, isotropy and longitudinal independence, by means of a parametric family of coefficients $f_m(n,n')$. The antisymmetric part $g_m(n,n')$ will be analyzed in the next subsection, so for the moment we assume that it is identically equal to zero.

Consider the covariance function (\ref{cov_as2}) with coefficients of the form
\begin{equation}
\label{spectrum}
f_{m}(n,n') = \sqrt{\xi_n \xi_{n'} } \rho(n-n') \lambda_m,
\end{equation}
where $\{\xi_n: n\in\mathbb{N}_0\}$ is a nonnegative sequence, $\rho$ is a stationary correlation function, and $\{\lambda_m: m\in\mathbb{N}_0\}$ is a nonnegative and bounded sequence. For every fixed $m\in\mathbb{N}_0$, (\ref{spectrum}) is clearly a positive semidefinite function.

We restrict attention to the following limit cases:
\begin{itemize}
\item[$-$] Suppose first that $\lambda_m = 1$, for all $m\in\mathbb{N}_0$. Thus, as the range of $\rho$ decreases to zero, $f_{m}(n,n')$ goes to $\delta_n^{n'} \xi_n$, converging to the isotropic case.
\item[$-$]  Longitudinal independence arises when $\lambda_0 > 0$ and $\lambda_m = 0$ for every $m\geq 1$, regardless of the choice of $\rho$.
\end{itemize}

There exist various ways to construct a sequence $\{\lambda_m: m\in\mathbb{N}_0\}$ that unifies both cases. For instance, the sequence could be taken as $\lambda_m = 1_{[0,\alpha]}(m)$, where $\alpha \in \mathbb{N}_0$ is a parameter and $1_A$ denotes the indicator function of $A$. While $\alpha=0$ represents longitudinal independence, $\alpha \rightarrow \infty$ corresponds to isotropy (provided that $\rho$ is a Kronecker delta). Another interesting alternative is $\lambda_m = (1 + \gamma m^2)^{-1}$, where $\gamma \geq 0$ is a continuous parameter. Here, $\gamma = 0$ and $\gamma \rightarrow \infty$ correspond to isotropy and longitudinal independence, respectively. Any isotropic model introduced in the literature can be obtained as a special case of (\ref{spectrum}). In this work, we prefer the sparse structure $\lambda_m = 1_{[0,\alpha]}(m)$ because it provides computational advantages. Series expansions with a large proportion of zeros have also been applied by \cite{stein2007spatial} in the study of total column ozone.

To ensure that (\ref{spectrum}) yields a well-defined second-order Gaussian process, condition (C4) must be verified. The following proposition
shows that under adequate assumptions of the asymptotic decay of $\{\xi_n: n\in\mathbb{N}_0\}$, condition (C4) holds.
\begin{proposition}
\label{sumabilidad_prop}
Consider $f_m(n,n')$ as in (\ref{spectrum}), and suppose that, for $n > n_0$,  $$\xi_n \leq r n^{-\beta},$$
for some constants $r>0$ and $n_0\in\mathbb{N}$. Then, condition (C4) holds if either
\begin{itemize}
\item[(i)] $\beta>4$; or
\item[(ii)] $\beta>2$ and $\rho(h)  = \delta_0^h$.
\end{itemize}
\end{proposition}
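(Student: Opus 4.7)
The plan is to prove (C4) by uniform-in-$L$ absolute convergence of the displayed series. Three ingredients drive the argument: the trivial bounds $|\rho(h)|\leq 1$ (valid for any stationary correlation function) and $\lambda_m\leq M:=\sup_m\lambda_m<\infty$; a uniform pointwise estimate on the normalized associated Legendre functions; and elementary $p$-series comparison.

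The key analytic input is obtained by specializing the addition theorem (\ref{addition_thm}) at $L_1=L_2=L$ and $\Delta\ell=0$. Since the great-circle distance then vanishes and $P_n(1)=1$, rearrangement using the definition of $\widetilde{P}_{nm}$ yields the identity
$$\widetilde{P}_{n0}(\cos L)^2 + 2\sum_{m=1}^n \widetilde{P}_{nm}(\cos L)^2 \;=\; \frac{2n+1}{4\pi},$$
so that $|\widetilde{P}_{nm}(\cos L)|\leq \sqrt{(2n+1)/(4\pi)}$ holds for every $0\leq m\leq n$ and every $L\in[0,\pi]$.

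To prove part (ii), I would exploit the Kronecker structure $\rho(h)=\delta_0^h$, which collapses the double index $(n,n')$ onto the diagonal $n=n'$. Applying the identity above directly to the inner $m$-sum for each fixed $n$ gives the clean bound $(M/4\pi)\sum_n \xi_n(2n+1)$; under $\xi_n\leq rn^{-\beta}$ the generic term is of order $n^{1-\beta}$, and convergence holds when $\beta>2$. To prove part (i), where $\rho$ is arbitrary, I would pass to absolute values, apply the uniform Legendre bound term-by-term, and use that the inner $m$-sum contains at most $\min(n,n')+1\leq C\sqrt{(n+1)(n'+1)}$ terms. Combined with $|\rho|\leq 1$ and $\lambda_m\leq M$, the resulting double sum decouples into the square of a single series in $n$ whose generic term is of order $n^{1-\beta/2}$ under the decay hypothesis, and this is $p$-summable precisely when $\beta>4$.

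The main obstacle is the layered structure of the three summations in (C4): the outer sum in $m$, the double sum in $(n,n')$ coupled through $\rho$, and the truncation $m\leq\min(n,n')$. The identity above is what makes part (ii) sharp; in part (i) the pointwise Legendre bound together with the crude counting of the $m$-sum is what forces the stricter threshold $\beta>4$, a natural price for not being able to exploit any additional cancellation from $\rho$.
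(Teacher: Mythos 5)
Your proposal is correct and reaches both thresholds by essentially the same two ingredients as the paper: the addition theorem evaluated at coincident points for part (ii), and a uniform pointwise bound on $\widetilde{P}_{nm}$ plus $|\rho|\leq 1$, $\lambda_m\leq\sup_m\lambda_m$ for part (i). Part (ii) is identical to the paper's argument. In part (i) you organize the triple sum differently: the paper keeps $m$ outermost, factors the inner double sum as $\bigl(\sum_{n\geq m}\sqrt{\xi_n(2n+1)}\bigr)^2$, bounds that tail by an integral comparison to get decay of order $m^{-(\beta-3)}$, and then sums over $m$ to land on $\beta>4$; you instead swap the order of summation, observe that each pair $(n,n')$ is hit by at most $\min(n,n')+1\leq C\sqrt{(n+1)(n'+1)}$ values of $m$, and thereby decouple the whole expression into the square of a single series with generic term $O(n^{1-\beta/2})$, summable exactly when $\beta>4$. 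The two bookkeepings are equivalent (Tonelli on a nonnegative array), but yours avoids the integral-comparison step and makes the $\beta>4$ threshold visible in one line; the paper's version keeps the $m$-dependence explicit, which is closer in spirit to how condition (C4) is stated. A small bonus on your side: deriving the uniform bound $|\widetilde{P}_{nm}(\cos L)|\leq\sqrt{(2n+1)/(4\pi)}$ from the addition theorem is self-contained and slightly sharper than the cited inequality $\max_{x}|\widetilde{P}_{nm}(x)|\leq\sqrt{2n+1}$ used in the paper, though the constant is immaterial here. One housekeeping point you pass over (as does the paper, mostly): the decay hypothesis only applies for $n>n_0$, so the finitely many indices with $n\leq n_0$ or $n'\leq n_0$ must be handled separately; this is routine since they contribute a finite constant times convergent tails, but it deserves a sentence.
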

\begin{proof}
Using the inequality \citep{doi:10.1002/sapm195534143}
$$  \max_{x \in [-1,1]} |\widetilde{P}_{nm}(x)| \leq  \sqrt{2n+1},$$
we observe that a sufficient condition to obtain a process with finite variance is
$$\displaystyle \sum_{n,n'=0}^\infty   f_0(n,n') \sqrt{(2n+1)(2n'+1)}
+  2 \sum_{m=1}^\infty  \sum_{n,n'=m}^\infty    f_m(n,n') \sqrt{(2n+1)(2n'+1)}  < \infty.$$
We only analyze the terms with $m>n_0$ (the other terms, for fixed $m\leq n_0$, can be studied in a similar manner). Since $\rho(h) \leq 1$, for all $h$, and $\{\lambda_m: m\in\mathbb{N}_0\}$ is a nonnegative and bounded sequence, we have
\begin{eqnarray}
\label{condicion_final}
\displaystyle  
\sum_{m>n_0}^\infty  \sum_{n,n'=m}^\infty   f_m(n,n')  \sqrt{(2n+1)(2n'+1)}   &    \leq  &  \sum_{m>n_0}^\infty \lambda_m  \sum_{n,n'=m}^\infty    \sqrt{\xi_n \xi_{n'}   {(2n+1)(2n'+1)} }\\ \nonumber
  &   \leq     &   \left(\sup_m \lambda_m\right)      \sum_{m>n_0}^\infty  \left( \sum_{n=m}^\infty  \sqrt{ \xi_n(2n+1)}  \right)^2\\  \nonumber
    &   \leq   &    \left(\sup_m \lambda_m\right)  3r \sum_{m>n_0}^\infty  \left( \sum_{n=m}^\infty   n^{-(\beta-1)/2}  \right)^2.  
\end{eqnarray}
Employing an integral bound, one has
\begin{equation*}
 \sum_{n=m}^\infty   n^{-(\beta- 1)/2}   
\leq   m^{-(\beta- 1)/2} +  \int_{m}^\infty   x^{-(\beta- 1)/2} \text{d}x  
 =  m^{-(\beta- 1)/2} +   \frac{2m^{-(\beta - 3)/2}}{\beta - 3}, 
\end{equation*}
for all $\beta>3$. Thus, we have that
$$ m \mapsto \left( \sum_{n=m}^\infty   n^{-(\beta- 1)/2} \right)^2$$ decays algebraically with order $\beta - 3$. We conclude that (\ref{condicion_final}) is finite provided that $\beta > 4$. The first part of the proof is completed.

Suppose now that $\rho(h)  = \delta_0^h$. To ensure (C4), we must verify that
$$ \displaystyle \sum_{n=0}^\infty  \xi_n \lambda_0 \widetilde{P}_{n0}(\cos L) \widetilde{P}_{n0}(\cos L)
+  2 \sum_{m=1}^\infty  \lambda_m \sum_{n=m}^\infty     \xi_n  \widetilde{P}_{nm}(\cos L) \widetilde{P}_{nm}(\cos L) < \infty.$$
Again, using that $\{\lambda_m: m\in\mathbb{N}_0\}$ is bounded, it is sufficient to show that
\begin{equation*}
\displaystyle \sum_{n=0}^\infty  \xi_n \widetilde{P}_{n0}(\cos L) \widetilde{P}_{n0}(\cos L)    
+  2 \sum_{m=1}^\infty  \sum_{n=m}^\infty     \xi_n  \widetilde{P}_{nm}(\cos L) \widetilde{P}_{nm}(\cos L)  < \infty,
\end{equation*}
which is equivalent to
\begin{equation*}
\label{eq_alternativa}
  \sum_{n=0}^\infty  \xi_n  \left\{  \widetilde{P}_{n0}(\cos L) \widetilde{P}_{n0}(\cos L)    
+  2  \sum_{m=1}^n   \widetilde{P}_{nm}(\cos L) \widetilde{P}_{nm}(\cos L)  \right\} < \infty. 
\end{equation*}
From the addition theorem for spherical harmonic functions, we obtain the simplified condition (\ref{sum_cond}), which is true provided that $\beta > 2$.
\end{proof}

The second part of Proposition \ref{sumabilidad_prop}, i.e., when $\rho(h)  = \delta_0^h$, matches previous literature related to the isotropic case \citep{schoenberg1942,lang2015isotropic}. However, it is slightly more general because we are not necessarily assuming that $\{\lambda_m: m\in\mathbb{N}_0\}$ is a sequence of ones.

\subsection{Modeling the Antisymmetric Part}

We illustrate a simple approach to construct the antisymmetric coefficients $g_m(n,n')$ from the coefficients $f_m(n,n')$.

\begin{proposition}
For each $m\in\mathbb{N}$, consider a covariance function $f_m(n,n')$ of type (\ref{spectrum}). Thus, for all $\kappa \in \mathbb{R}$, the cross-covariance function
\begin{equation*}
g_m(n,n') =  \frac{\sqrt{\xi_n \xi_{n'} } \lambda_m}{{4}} \bigg\{\rho(n-n' - \kappa) - \rho(n-n'+\kappa)\bigg\},
\end{equation*}
is an admissible model,  in the sense that (\ref{big-matrix}) is a positive semidefinite matrix.
\end{proposition}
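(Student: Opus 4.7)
The plan is to reduce the claim to the Hermitian positive-semidefiniteness of a single complex matrix, and then verify that via Bochner's theorem. First, I would check that $\bm{G}_m$ is antisymmetric, so that $\bm{\Gamma}_m$ really has the block shape $\begin{pmatrix} \bm{F}_m & \bm{G}_m \\ -\bm{G}_m & \bm{F}_m \end{pmatrix}$ with $\bm{F}_m$ symmetric and $\bm{G}_m$ antisymmetric. Antisymmetry follows from the evenness of the real stationary correlation $\rho$: swapping $(n,n')$ sends $n-n'\mapsto -(n-n')$, which interchanges $\rho(n-n'-\kappa)$ and $\rho(n-n'+\kappa)$, flipping the sign.

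Next, I would invoke the classical equivalence: for $\bm{F}$ real symmetric and $\bm{G}$ real antisymmetric, the real block matrix $\begin{pmatrix} \bm{F} & \bm{G} \\ -\bm{G} & \bm{F} \end{pmatrix}$ is positive semidefinite if and only if the complex Hermitian matrix $\bm{F} - \imath\bm{G}$ is positive semidefinite. The verification is a direct calculation: writing $\bm{v} = \bm{u} + \imath\bm{w}$ with real $\bm{u},\bm{w}$, the symmetry/antisymmetry cancellations yield
$$\bm{v}^{*}(\bm{F} - \imath\bm{G})\bm{v} \;=\; \bm{u}^\top \bm{F}\bm{u} + \bm{w}^\top \bm{F}\bm{w} + 2\bm{u}^\top \bm{G}\bm{w},$$
which matches the real quadratic form $(\bm{u}^\top, \bm{w}^\top)\,\bm{\Gamma}_m\,(\bm{u}^\top, \bm{w}^\top)^{\top}$. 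Hence it suffices to show that $\bm{F}_m - \imath\bm{G}_m$ is Hermitian positive semidefinite.

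Then I would apply Bochner's theorem to represent $\rho(h) = \int e^{\imath h\omega}\,\mathrm{d}F(\omega)$ for some nonnegative finite spectral measure $F$, giving
$$\rho(h-\kappa) - \rho(h+\kappa) = -2\imath \int e^{\imath h\omega}\sin(\kappa\omega)\,\mathrm{d}F(\omega).$$
Substituting into the definitions of $f_m$ and $g_m$, a short algebraic check produces
$$f_m(n,n') - \imath\, g_m(n,n') = \sqrt{\xi_n\xi_{n'}}\,\lambda_m \int e^{\imath(n-n')\omega}\bigl[1 - \tfrac{1}{2}\sin(\kappa\omega)\bigr]\mathrm{d}F(\omega),$$
so the Hermitian form on any $\bm{c}\in\mathbb{C}^{k}$ factors as
$$\bm{c}^{*}(\bm{F}_m - \imath\bm{G}_m)\bm{c} = \lambda_m \int \Bigl|\sum_{i} c_i \sqrt{\xi_{n_i}}\,e^{-\imath n_i\omega}\Bigr|^{2} \bigl[1 - \tfrac{1}{2}\sin(\kappa\omega)\bigr]\mathrm{d}F(\omega),$$
which is nonnegative because $\lambda_m \ge 0$, $\mathrm{d}F \ge 0$, and the spectral weight $1 - \tfrac{1}{2}\sin(\kappa\omega) \ge \tfrac{1}{2} > 0$.

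The main obstacle is essentially the bookkeeping in the complex algebra: the $1/4$ normalization built into $g_m$ is precisely what produces the safe factor $1/2 < 1$ multiplying $\sin(\kappa\omega)$, ensuring that the spectral weight remains nonnegative without imposing any extra condition on $\kappa$ or on $F$. Once the Bochner representation and the block-matrix / complex-matrix equivalence are in place, the remaining steps are routine.
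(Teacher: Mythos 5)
Your proof is correct, but it follows a genuinely different route from the paper. The paper's argument is constructive and probabilistic: it takes two independent sequences $\{\widetilde{a}_{nm}\}$, $\{\widetilde{b}_{nm}\}$ with covariance (\ref{spectrum}) and defines $a_{nm}$, $b_{nm}$ as explicit linear combinations involving an index shift by $q$ (equation (\ref{transformation})); since the target coefficients are then realized as the covariances of actual random variables, positive semidefiniteness of (\ref{big-matrix}) is automatic, and the passage from integer $q$ to real $\kappa$ is handled (somewhat informally) by embedding into continuously indexed variables. You instead reduce the claim to Hermitian positive semidefiniteness of $\bm{F}_m - \imath\bm{G}_m$ via the standard real--complex block correspondence, and then factor the Hermitian form through Bochner's spectral representation of $\rho$, obtaining the strictly positive spectral weight $1-\tfrac{1}{2}\sin(\kappa\omega)$; your algebra (the $-2\imath\sin(\kappa\omega)$ identity, the cancellation giving the $\tfrac{1}{2}$, and the quadratic-form identity for the block matrix) checks out. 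Your route treats all real $\kappa$ uniformly from the start, which is cleaner than the paper's integer-to-real extension, and it even shows the normalizing constant $1/4$ is not sharp (anything up to $1/2$ would still give a nonnegative weight); the paper's construction, in exchange, is more elementary, doubles as a simulation recipe, and does not require $\rho$ to admit a Bochner representation. That last point is the one implicit assumption you should flag: classical Bochner requires $\rho$ continuous on $\mathbb{R}$, whereas the paper repeatedly uses $\rho(h)=\delta_0^h$; for that case your argument still goes through, either trivially (non-integer $\kappa$ gives $g_m\equiv 0$) or via the Herglotz representation on $\mathbb{Z}$ for integer $\kappa$, but it deserves a sentence.
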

\begin{proof}
 Consider two independent sequences of zero-mean random variables, $\{\widetilde{a}_{nm}\}$ and $\{\widetilde{b}_{nm}\}$, with a covariance structure of type (\ref{spectrum}). For a nonnegative integer $q$, new sequences of coefficients are defined
\begin{equation}
\label{transformation}
\begin{cases}
a_{nm} = \displaystyle \frac{1}{\sqrt{2}}\left(\widetilde{a}_{nm} + \sqrt{\frac{\xi_n}{\xi_{(n+q)}}} \widetilde{b}_{(n+q)m} \right),\\ \\
b_{nm} = \displaystyle \frac{1}{\sqrt{2}}\left( \sqrt{\frac{\xi_n}{\xi_{(n+q)}}} \widetilde{a}_{(n+q)m} - \widetilde{b}_{nm}\right).
\end{cases}
\end{equation}
Hence, $\text{cov}\{ {a}_{nm}, {a}_{n'm'} \}$ and $\text{cov}\{ {b}_{nm}, {b}_{n'm'} \}$ are given by (\ref{spectrum}), that is, the marginal covariance functions are preserved by transformation (\ref{transformation}). In addition, the cross-covariance function between the sequences $\{{a}_{nm}\}$ and $\{{b}_{nm}\}$ is given by
\begin{equation}
\label{parte-antisimetrica}
\text{cov}\{ {a}_{nm}, {b}_{n'm'} \} = -\text{cov}\{ {b}_{nm}, {a}_{n'm'} \} =   \frac{\delta_{m}^{m'} \sqrt{\xi_n \xi_{n'} } \lambda_m}{{4}} \bigg\{\rho(n-n' - q) - \rho(n-n'+q)\bigg\}.
\end{equation}
 Following a similar scheme with continuous indices, we conclude that the cross-covariance function (\ref{parte-antisimetrica}) is also valid if we replace the integer $q$ with a parameter $\kappa$ that varies continuously on $\mathbb{R}$. More precisely, we consider random variables $\widetilde{a}_{nm}$ and $\widetilde{b}_{nm}$ with indices $n$ and $m$ on the real line. The sequences ${a}_{nm}$ and ${b}_{nm}$ are constructed as in (\ref{transformation}) (with $\kappa$ instead of $q$) by restricting these indices to be nonnegative integers. The obtained sequences will have the desired cross-covariance function with $\kappa\in\mathbb{R}$.
\end{proof}

The antisymmetric part is identically equal to zero when $\kappa = 0$, recovering a longitudinally reversible process. In Section \ref{ilustracion-antisymmetric}, we will illustrate the impact of $g_m(n,n')$ on the covariance function as well as on the realizations of the process.

\subsection{Finite Karhunen-Lo\`eve Expansion and its $L^2$-Error}

This section focuses on the approximation of axially symmetric Gaussian processes through a finite linear combination of spherical harmonic functions. This approximation can be performed by means of a truncated version of (\ref{proceso}), where truncation is taken with respect to index $n$. Specifically, given a large $N\in\mathbb{N}$, we consider
\begin{equation}
\label{sim}
\widehat{Z}_N(L,\ell) =    \sum_{n=0}^{N}   a_{n0} \widetilde{P}_{n0}(\cos L)  + 2 \sum_{m=1}^N    \sum_{n=m}^N   \bigg\{ a_{nm} \cos(m\ell)+ b_{nm} \sin(m\ell) \bigg\} \widetilde{P}_{nm}(\cos L).
\end{equation}
Note that (\ref{sim}) is a truncation of (\ref{proceso}), where we have simply modified the order of summation for convenience. This approximation technique was developed for isotropic spatial processes by \cite{lang2015isotropic} and for spatially isotropic space-time processes by \cite{de2018regularity}.

The purpose now is to derive a bound for the $L^2$-error in terms of $N$. The following proposition characterizes the accuracy of the approximation.

\begin{proposition}
\label{prop1}
Let $N\in\mathbb{N}$ and consider the stochastic processes $Z(L,\ell)$ and $\widehat{Z}_N(L,\ell)$ in (\ref{proceso}) and (\ref{sim}), respectively, under conditions (C1)-(C4). Then,
\begin{equation}
\| Z-\widehat{Z}_N \|^2_{L^2(\Omega \times \mathbb{S}^2)}   =   \sum_{n=N+1}^\infty   f_0(n,n) + 2\sum_{n=N+1}^\infty \sum_{m=1}^n  f_m(n,n).
\end{equation}
\end{proposition}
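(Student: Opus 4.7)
The plan is to identify $Z-\widehat{Z}_N$ as the tail of the Karhunen-Loève expansion, compute its pointwise second moment, and then integrate over $\mathbb{S}^2$ using the orthogonality relations induced by the spherical harmonics. Since $\widehat{Z}_N$ retains exactly the indices $(n,m)$ with $0\le m\le n\le N$ from the expansion (\ref{proceso}), the residual may be written as
\begin{equation*}
Z(L,\ell)-\widehat{Z}_N(L,\ell)=\sum_{n=N+1}^{\infty}a_{n0}\widetilde{P}_{n0}(\cos L)+2\sum_{n=N+1}^{\infty}\sum_{m=1}^{n}\bigl\{a_{nm}\cos(m\ell)+b_{nm}\sin(m\ell)\bigr\}\widetilde{P}_{nm}(\cos L).
\end{equation*}

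Next I would square this expression and take expectations, applying (C1)--(C3) term by term. The key bookkeeping point is that the cross contributions involving $g_m(n,n')$ come with the factor $\cos(m\ell)\sin(m'\ell)-\sin(m\ell)\cos(m'\ell)$, which vanishes after the Kronecker delta $\delta_m^{m'}$ collapses $m'$ to $m$. The $f_m$ contributions combine via $\cos^2(m\ell)+\sin^2(m\ell)=1$, so the pointwise variance is independent of $\ell$ and equals
\begin{equation*}
E\bigl[(Z-\widehat{Z}_N)^2(L,\ell)\bigr]=\sum_{n,n'=N+1}^{\infty}f_0(n,n')\widetilde{P}_{n0}(\cos L)\widetilde{P}_{n'0}(\cos L)+2\sum_{m=1}^{\infty}\sum_{n,n'\ge\max(N+1,m)}f_m(n,n')\widetilde{P}_{nm}(\cos L)\widetilde{P}_{n'm}(\cos L).
\end{equation*}

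The final step is to integrate over $\mathbb{S}^2$ against the surface measure $\sin L\,dL\,d\ell$. Because $\mathcal{Y}_{nm}=\widetilde{P}_{nm}(\cos L)e^{\imath m\ell}$ is orthonormal, one obtains $\int_{\mathbb{S}^2}\widetilde{P}_{nm}(\cos L)\widetilde{P}_{n'm}(\cos L)\,d\sigma=\delta_n^{n'}$, which collapses each double sum in $(n,n')$ to a single diagonal sum. After swapping the order of the $(n,m)$ summation (with $1\le m\le n$, $n\ge N+1$) the claimed expression
\begin{equation*}
\|Z-\widehat{Z}_N\|^2_{L^2(\Omega\times\mathbb{S}^2)}=\sum_{n=N+1}^{\infty}f_0(n,n)+2\sum_{n=N+1}^{\infty}\sum_{m=1}^{n}f_m(n,n)
\end{equation*}
falls out directly.

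The only delicate issue is justifying the interchange of expectation, the integral over $\mathbb{S}^2$, and the infinite sums. This is where condition (C4) earns its keep: it guarantees that the pointwise variance is finite and in fact integrable on $\mathbb{S}^2$, so Fubini--Tonelli legitimates all the reorderings. I do not anticipate other substantive obstacles; the bulk of the argument is algebraic manipulation exploiting the orthogonality of the associated Legendre polynomials and the cancellation of the antisymmetric $g_m$ contributions at zero longitudinal lag.
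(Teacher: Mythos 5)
Your argument is correct and follows essentially the same route as the paper's proof: identify the residual as the tail of the expansion, use the uncorrelatedness in $m$ from (C1)--(C3) (which in particular makes the $g_m$ contributions cancel), and collapse the double sums via the orthogonality of the $\widetilde{P}_{nm}$ and of the trigonometric basis. The only difference is presentational --- you compute the pointwise second moment of the whole tail and then integrate over $\mathbb{S}^2$, whereas the paper first splits the tail into three mutually uncorrelated pieces $T_1,T_2,T_3$ and treats each separately; both bookkeeping schemes lead to the same diagonal sums.
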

\begin{proof}
First, observe that $Z-\widehat{Z}_N$ can be split into three parts, namely, $Z -\widehat{Z}_N =   T_1 + 2 T_2 + 2 T_3$, where
$$T_j =   \sum_{(n,m)\in \Delta_j}   \bigg\{ a_{nm} \cos(m\ell)+ b_{nm} \sin(m\ell) \bigg\} \widetilde{P}_{nm}(\cos L),$$
with $\Delta_1 = \{ (n,m): n > N \text{ and } m=0 \}$, $\Delta_2 = \{ (n,m): n > N \text{ and } 1 \leq m \leq N  \}$, and $\Delta_3 = \{ (n,m): n \geq  m  \text{ and } m > N  \}$. According to conditions (C1)-(C4), $T_i$ and $T_j$ are uncorrelated processes for all $i\neq j$ since the sets $\Delta_i$ and $\Delta_j$ are disjoint sets in the index $m$. Thus,
$$\| Z-\widehat{Z}_N \|^2_{L^2(\Omega \times \mathbb{S}^2)} =  \| T_1\|^2_{L^2(\Omega\times \mathbb{S}^2)} + 4\| T_2\|^2_{L^2(\Omega\times \mathbb{S}^2)} + 4\| T_3\|^2_{L^2(\Omega\times \mathbb{S}^2)}.$$
The decomposition of $Z-\widehat{Z}_N$ into three mutually uncorrelated processes is a key part of this proof. Using integration in terms of spherical coordinates, one has
\begin{equation*}
 \| T_1\|^2_{L^2(\Omega\times \mathbb{S}^2)} =  \sum_{n,n'=N+1}^\infty E\{a_{n0}a_{n'0}\} \int_0^{2\pi} \int_0^{\pi} \widetilde{P}_{n0}(\cos L) \widetilde{P}_{n'0}(\cos L) \sin L \, \text{d}L \, \text{d}\ell.
\end{equation*}
Recall that $E\{a_{n0}a_{n'0}\} = f_0(n,n')$. Additionally, using the orthogonality properties of the associated Legendre polynomials (see \citealp{abramowitz1988handbook}),
\begin{equation}
\label{ortogonalidad}
 \int_0^{\pi} \widetilde{P}_{nm}(\cos L) \widetilde{P}_{n'm}(\cos L) \sin L \, \text{d}L = \frac{\delta_n^{n'}}{2\pi},
  \end{equation}
we conclude that
$  \| T_1\|^2_{L^2(\Omega\times \mathbb{S}^2)} =  \sum_{n=N+1}^\infty  f_0(n,n).$
In addition, we have
\begin{multline*}
\| T_2\|^2_{L^2(\Omega\times \mathbb{S}^2)} =   \int_0^{2\pi}   \sum_{m,m'=1}^N  \sum_{n,n'=N+1}^\infty   E\bigg(   \bigg\{ a_{nm} \cos(m\ell)+ b_{nm} \sin(m\ell) \bigg\}  \\   \times \bigg\{ a_{n'm'} \cos(m'\ell)+ b_{n'm'} \sin(m'\ell) \bigg\}  \bigg)
\times \int_0^{\pi}  \widetilde{P}_{nm}(\cos L)   \widetilde{P}_{n'm'}(\cos L) \sin L \, \text{d}L \, \text{d}\ell .
\end{multline*}
Using conditions (C1)-(C4) and the orthogonality properties of the trigonometric Fourier basis on $[0,2\pi]$, we have
\begin{equation*}
 \| T_2\|^2_{L^2(\Omega\times \mathbb{S}^2)} =     \sum_{m=1}^N  \sum_{n,n'=N+1}^\infty \pi  f_m(n,n')     \int_0^{\pi}   \widetilde{P}_{nm}(\cos L)   \widetilde{P}_{n'm}(\cos L) \sin L \, \text{d}L.
\end{equation*}
Thus,   (\ref{ortogonalidad}) implies that
$$ \| T_2\|^2_{L^2(\Omega\times \mathbb{S}^2)} =      \sum_{m=1}^N \sum_{n=N+1}^\infty \frac{f_m(n,n)}{2}.$$
Using similar arguments, we obtain
$$ \| T_3\|^2_{L^2(\Omega\times \mathbb{S}^2)} =     \sum_{m=N+1}^\infty    \sum_{n=m}^\infty \frac{f_m(n,n)}{2}.$$
We conclude the proof by noting that
$$ \| T_2\|^2_{L^2(\Omega\times \mathbb{S}^2)}  +  \| T_3\|^2_{L^2(\Omega\times \mathbb{S}^2)} = \sum_{n=N+1}^\infty \sum_{m=1}^n\frac{f_m(n,n)}{2}.$$

\end{proof}

A remarkable feature of Proposition \ref{prop1} is that the $L^2$-error is completely characterized by the decay of the diagonal elements of the matrices $\bm{\Gamma}_m$ in (\ref{big-matrix}). Therefore, the antisymmetric part does not influence this quantity. In the isotropic case, we obtain a corollary result previously reported by \cite{lang2015isotropic}.

\begin{cor}
\label{prop2}
Let $N\in\mathbb{N}$ and consider $Z(L,\ell)$ and $\widehat{Z}_N(L,\ell)$ in (\ref{proceso}) and (\ref{sim}), respectively, under conditions (C1)-(C4). In addition, suppose that $f_m(n,n') = \delta_n^{n'} \xi_n$. Then,
\begin{equation}
\| Z-\widehat{Z}_N \|^2_{L^2(\Omega \times \mathbb{S}^2)}   =   \sum_{n=N+1}^\infty (2n+1)  \xi_n.
\end{equation}
\end{cor}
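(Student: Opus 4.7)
The plan is to invoke Proposition \ref{prop1} directly, since the corollary is just the specialization of the general $L^2$-error formula to the isotropic covariance structure. Under the hypothesis $f_m(n,n') = \delta_n^{n'} \xi_n$, we have $f_m(n,n) = \xi_n$ for every admissible pair $(n,m)$ with $m \leq n$ (and in particular $f_0(n,n) = \xi_n$), so only the value $\xi_n$ survives on the diagonal regardless of $m$.

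The key steps, in order, are as follows. First, I would verify that the isotropic choice $f_m(n,n') = \delta_n^{n'}\xi_n$ fits into the framework of Proposition \ref{prop1}: conditions (C1)--(C3) are immediate (with the antisymmetric part $g_m \equiv 0$), and (C4) reduces, via the addition theorem together with $P_n(1)=1$, to the standard summability condition $\sum_{n\geq 0} (2n+1)\xi_n < \infty$, matching (\ref{sum_cond}). Second, I would substitute into the expression given by Proposition \ref{prop1}, obtaining
\begin{equation*}
\| Z-\widehat{Z}_N \|^2_{L^2(\Omega \times \mathbb{S}^2)} = \sum_{n=N+1}^{\infty} \xi_n + 2 \sum_{n=N+1}^{\infty} \sum_{m=1}^{n} \xi_n.
\end{equation*}
Third, I would evaluate the inner sum $\sum_{m=1}^n \xi_n = n\, \xi_n$, so that the two terms combine into $\sum_{n=N+1}^{\infty}(1+2n)\xi_n$, which is the claimed identity.

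There is no substantive obstacle here; the corollary is essentially an algebraic bookkeeping exercise on top of Proposition \ref{prop1}. The only mildly delicate point is to note that the inner sum in Proposition \ref{prop1} runs over $m=1,\dots,n$ (the permissible range dictated by $n \geq m$), so that the multiplicity $2n+1$ — the dimension of the space of spherical harmonics of degree $n$ — emerges naturally as $1 + 2n$, recovering the well-known isotropic expression and matching the result of \cite{lang2015isotropic}.
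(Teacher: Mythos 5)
Your proposal is correct and matches the paper's (implicit) argument: the paper presents this corollary as an immediate specialization of Proposition \ref{prop1}, and your substitution $f_m(n,n)=\xi_n$ with the inner sum $\sum_{m=1}^{n}\xi_n = n\,\xi_n$ is exactly the intended bookkeeping yielding $\sum_{n=N+1}^{\infty}(2n+1)\xi_n$.
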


Since Proposition \ref{prop1} gives the $L^2$-error for every axially symmetric process, we derive an explicit bound when the coefficients $f_m(n,n')$ are of the form (\ref{spectrum}). Such a result is reported in the following corollary.

\begin{cor}
\label{bound-particular-spectrum}
Let $N\in\mathbb{N}$ and consider $Z(L,\ell)$ and $\widehat{Z}_N(L,\ell)$ in (\ref{proceso}) and (\ref{sim}), respectively, under conditions (C1)-(C4) with $f_m(n,n')$ of the form (\ref{spectrum}). Suppose that there exist constants $\beta>2$, $r>0$ and $n_0\in\mathbb{N}$ such that $\xi_n \leq r n^{-\beta}$, for all $n > n_0$. Thus,
\begin{equation}
\| Z-\widehat{Z}_N \|^2_{L^2(\Omega \times \mathbb{S}^2)}   \leq c N^{-(\beta-2)},
\end{equation}
for some constant $c>0$ that depends on $r,\beta,n_0$ and that is independent of $N$.
\end{cor}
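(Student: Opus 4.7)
The plan is to apply Proposition \ref{prop1} directly and then bound the resulting tail sums using the polynomial decay of $\xi_n$. Since $f_m(n,n') = \sqrt{\xi_n \xi_{n'}}\,\rho(n-n')\,\lambda_m$ and $\rho(0) = 1$ (as $\rho$ is a stationary correlation function), the diagonal values simplify to $f_m(n,n) = \xi_n \lambda_m$. Substituting into the identity of Proposition \ref{prop1} gives
$$\| Z-\widehat{Z}_N \|^2_{L^2(\Omega \times \mathbb{S}^2)} = \lambda_0 \sum_{n=N+1}^\infty \xi_n + 2 \sum_{n=N+1}^\infty \xi_n \sum_{m=1}^n \lambda_m.$$

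Next I would exploit the fact that $\{\lambda_m\}$ is a nonnegative and bounded sequence, setting $\Lambda := \sup_{m \in \mathbb{N}_0} \lambda_m < \infty$. Then $\sum_{m=1}^n \lambda_m \leq n \Lambda$, which yields
$$\| Z-\widehat{Z}_N \|^2_{L^2(\Omega \times \mathbb{S}^2)} \leq \Lambda \sum_{n=N+1}^\infty \xi_n + 2\Lambda \sum_{n=N+1}^\infty n\, \xi_n \leq 3\Lambda \sum_{n=N+1}^\infty n\, \xi_n,$$
where the last inequality uses $\xi_n \leq n\xi_n$ for $n \geq 1$.

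After that, I would assume without loss of generality that $N \geq n_0$ (the finitely many smaller values of $N$ can be absorbed into the final constant, since the $L^2$-error is finite by condition (C4), which holds under $\beta > 2$ and $\rho(h) = \delta_0^h$; more generally, the bound is trivial for $N \le n_0$ since there are only finitely many such $N$). Invoking the hypothesis $\xi_n \leq r n^{-\beta}$ for $n > n_0$, the tail becomes $\sum_{n=N+1}^\infty n \xi_n \leq r \sum_{n=N+1}^\infty n^{1-\beta}$, and an integral comparison (exactly as in the proof of Proposition \ref{sumabilidad_prop}) gives
$$\sum_{n=N+1}^\infty n^{1-\beta} \leq \int_N^\infty x^{1-\beta}\, \text{d}x = \frac{N^{-(\beta-2)}}{\beta-2}.$$
Combining the estimates yields the claimed bound with $c = 3\Lambda r / (\beta-2)$ (inflated, if necessary, to cover $N \leq n_0$).

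I do not anticipate any serious obstacle: all the heavy lifting is done by Proposition \ref{prop1}, which eliminates the antisymmetric part and the off-diagonal entries $\rho(n-n')$ from the error formula, so that only the diagonal decay of $\xi_n$ together with the elementary bound $\sum_{m=1}^n \lambda_m \leq n\Lambda$ enters. The only mildly subtle point is that the factor $\sum_{m=1}^n \lambda_m$ introduces one extra power of $n$, which is precisely what turns the $\beta$ of Proposition \ref{sumabilidad_prop}(i) into the rate exponent $\beta - 2$ in the corollary, rather than $\beta - 1$ or $\beta$.
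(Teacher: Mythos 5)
Your proof is correct and follows essentially the same route as the paper: reduce the error identity of Proposition \ref{prop1} to $\left(\sup_m \lambda_m\right)\sum_{n=N+1}^{\infty}(2n+1)\xi_n$ using $f_m(n,n)=\xi_n\lambda_m$ and the boundedness of $\{\lambda_m\}$, then apply the polynomial decay of $\xi_n$ with an integral comparison. The only difference is that you carry out the final tail estimate explicitly, whereas the paper delegates it to the argument of Lang and Schwab, so your write-up is, if anything, more self-contained.
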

Corollary \ref{bound-particular-spectrum} above extends Proposition 5.2 in \cite{lang2015isotropic}, from the isotropic to the axially symmetric case.
When $f_m(n,n')$ is of the form (\ref{spectrum}), we have 
\begin{eqnarray*}
\sum_{n=N+1}^\infty  f_0(n,n)  + 2\sum_{n=N+1}^\infty \sum_{m=1}^n  f_m(n,n)  \leq  \left(  \sup_m \lambda_m \right)   \sum_{n=N+1}^\infty   (2 n+1) \xi_n.
\end{eqnarray*}
Thus, the proof of Corollary \ref{bound-particular-spectrum} follows the same arguments given by \cite{lang2015isotropic}. Additionally, it is worth noting that the same rate of convergence obtained in Corollary \ref{bound-particular-spectrum} can be derived in terms of $L^p$ norms for every $p>0$. The proof of this assertion is completely analogous to the proofs reported in \cite{lang2015isotropic} and \cite{cleanthous2020regularity}, so it is omitted.

\section{Numerical Experiments}
\label{simulacion}

\subsection{Simulating Axially Symmetric Gaussian Processes}

The approximation methodology developed in the previous section suggests a natural simulation algorithm based on a weighted sum of finitely many spherical harmonic functions. We simulate longitudinally reversible Gaussian processes (antisymmetric coefficients are explored in the next subsection). Consider the following particular cases of (\ref{spectrum}):

\begin{description}

\item[\bf Example 1.] Let $\rho(h) = \delta_0^{h}$ and consider a sequence $\{\xi_n: n\in\mathbb{N}_0\}$ of Legendre-Mat\'ern type \citep{guinness2016isotropic}, that is,
\begin{equation*}
\label{lm_seq}
\xi_n = (\tau^2 + n^2)^{-\nu-1/2},
\end{equation*}
where $\tau$ and $\nu$ are positive parameters. The sequence $\{\lambda_m: m\in\mathbb{N}_0\}$ is taken as $\lambda_m = 1_{[0,\alpha]}(m)$. When $\alpha \rightarrow \infty$, the covariance function of the process converges to the Legendre-Mat\'ern model of \cite{guinness2016isotropic}, which is associated with an isotropic process for which $\tau$ and $\nu$ regulate the range and mean square differentiability of the sample paths, respectively (see \citealp{guinness2016isotropic} for details). Recall that a longitudinally independent structure is obtained with $\alpha = 0$. Using Proposition \ref{sumabilidad_prop}, it is straightforward to verify that the summability condition (C4) holds when $\nu>1/2$. In Figure \ref{fig_guinness}, we report simulated realizations with $\nu = 1.5$, $\tau^2 = 100$ and different values for $\alpha$. We set $N = 200$ and a grid of longitudes and latitudes of size $500 \times 500$. As $\alpha$ increases, the features of the realizations gradually change from longitudinal independence to isotropy.

\begin{figure}
  \centering
\includegraphics[scale=0.1]{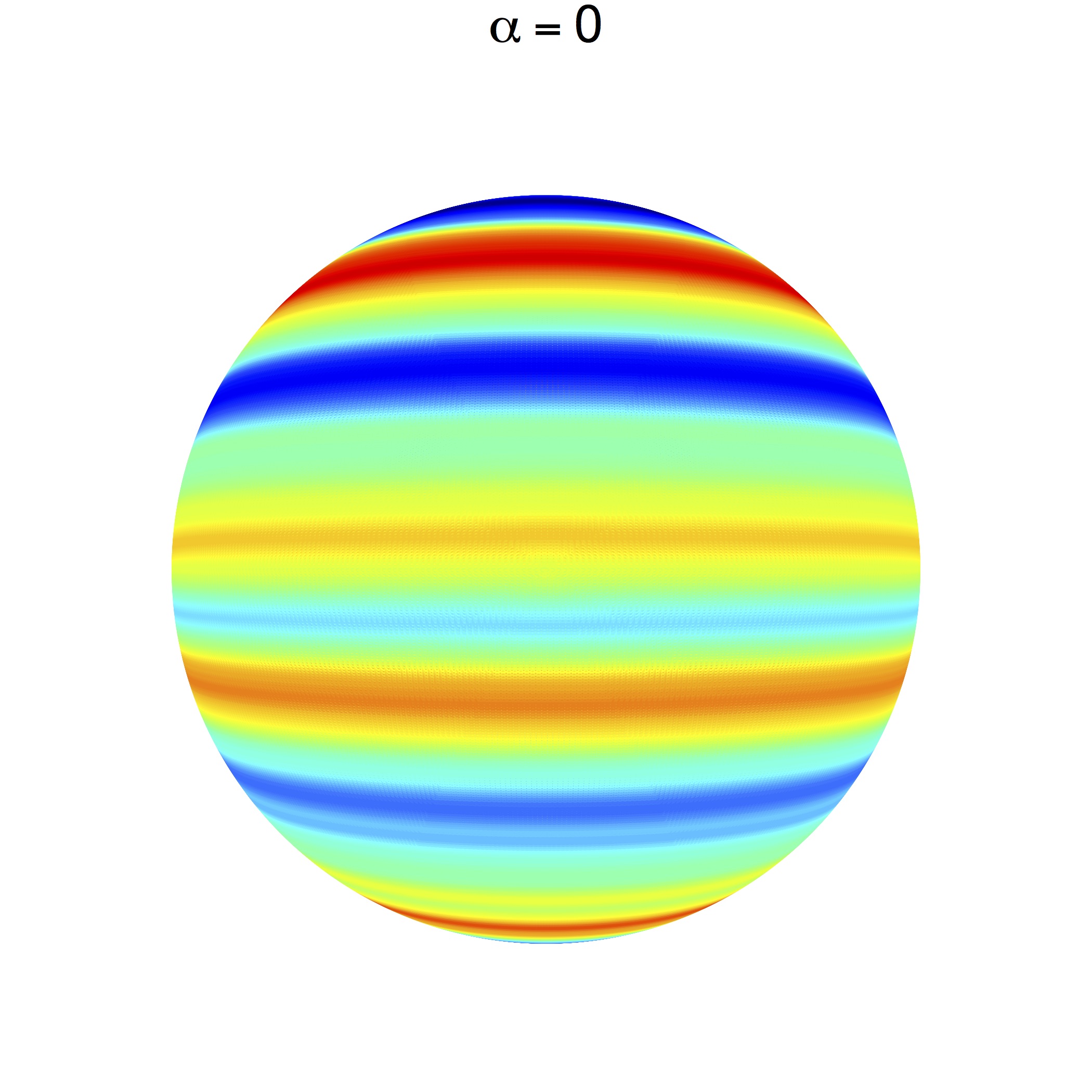}     \includegraphics[scale=0.1]{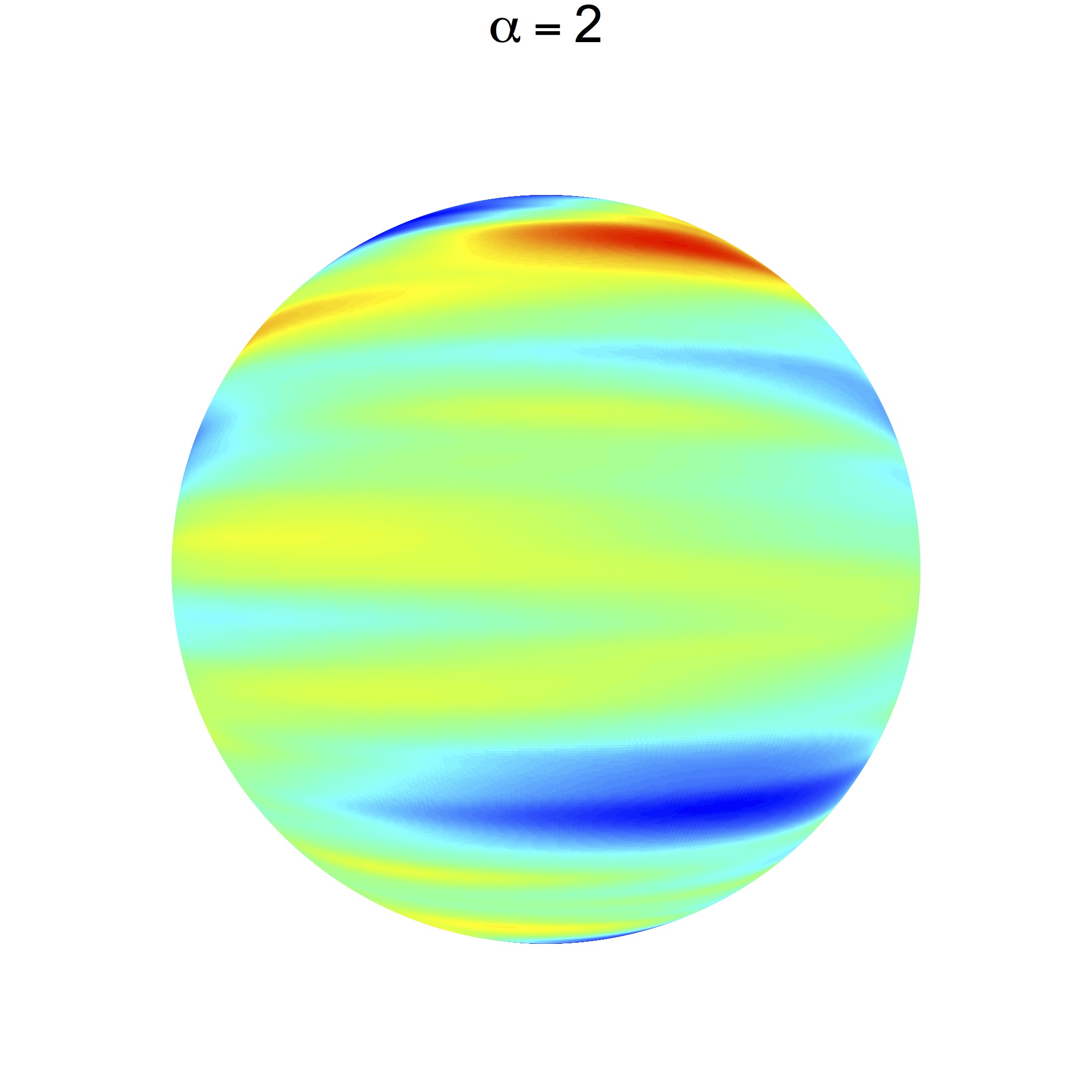}
\includegraphics[scale=0.1]{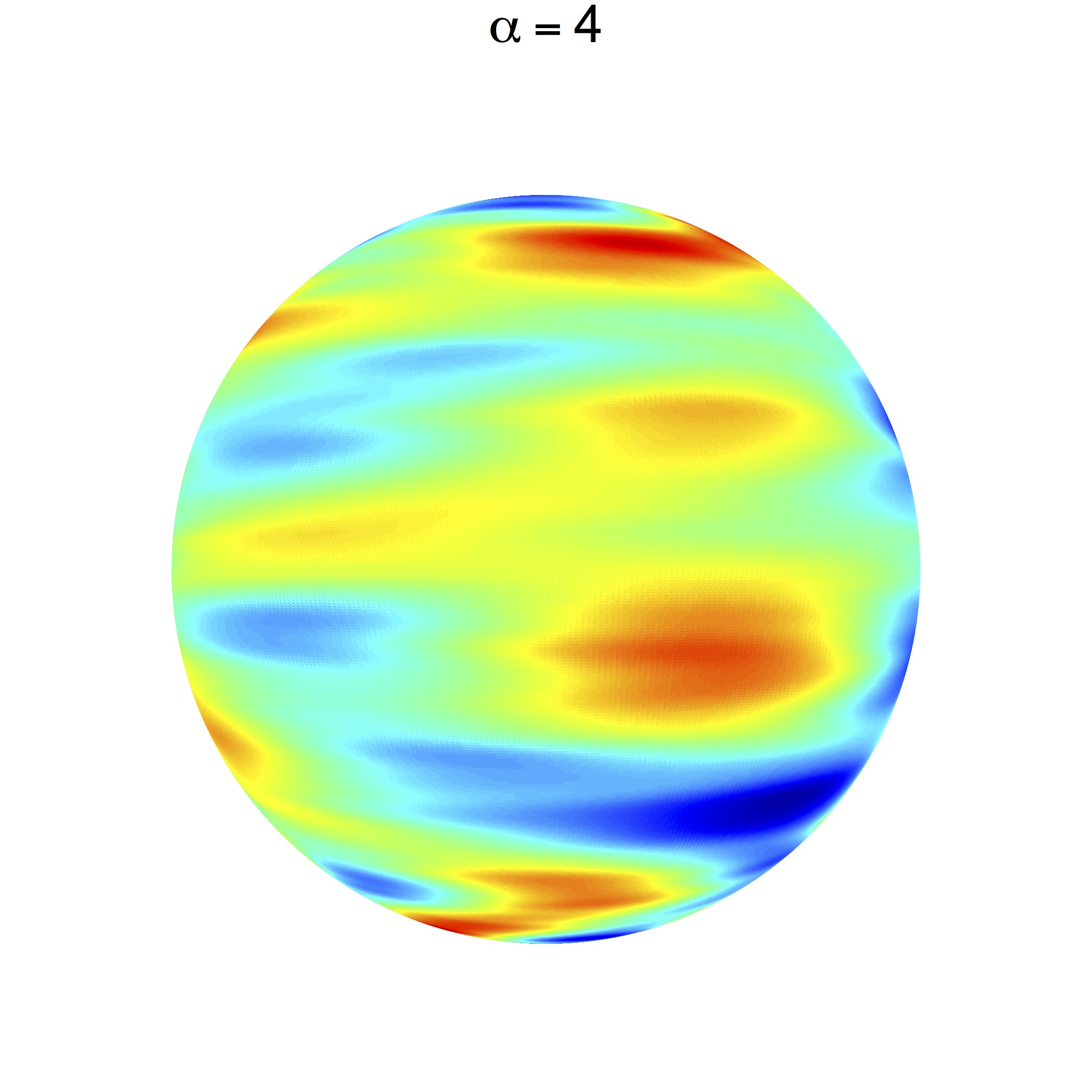}   \includegraphics[scale=0.1]{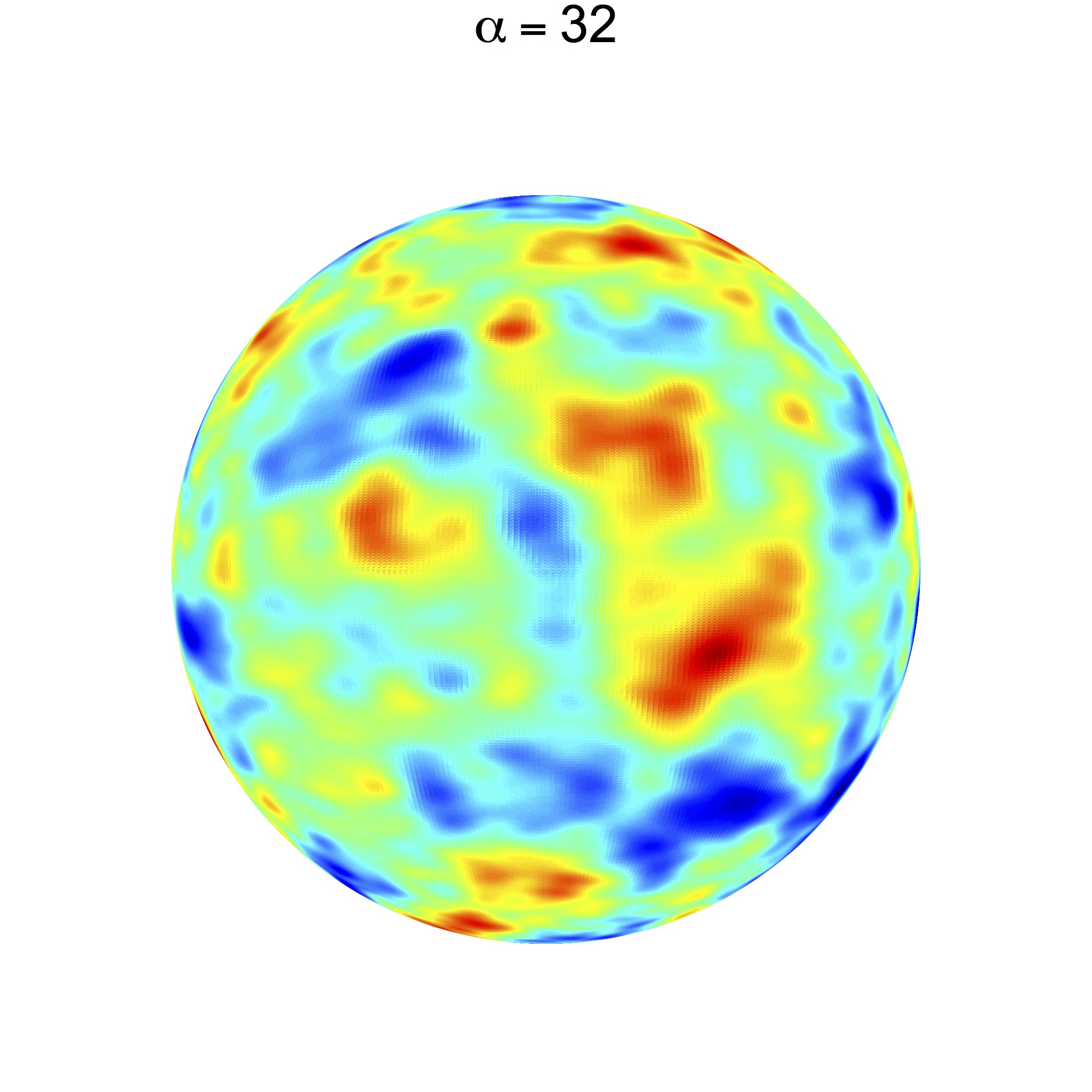}
\caption{Simulated processes on a grid of longitudes and latitudes of size $500\times 500$, with $N=200$, and the covariance function described in Example 1, with $\nu = 1.5$, $\tau^2 = 100$, and different values for $\alpha$. The same random seed has been used for each realization.}
        \label{fig_guinness}
\end{figure}

\item[\bf Example 2.] The multiquadric model (see, e.g., \citealp{gneiting2013strictly}) is characterized by the sequence
\begin{equation}
\label{multiq_seq}
\xi_n = (1-\delta) \delta^n, \qquad n\in\mathbb{N}_0,
\end{equation}
where $0<\delta<1$ is a parameter. The correlation function $\rho$ and the sequence $\{\lambda_m: m\in\mathbb{N}_0\}$ are taken as in Example 1. Here, the summability condition (C4) is visibly satisfied for any $0<\delta<1$. Figure \ref{fig_multiq} displays simulated realizations with $\delta=0.7$ and different values for $\alpha$. We again consider $N = 200$ and a grid of longitudes and latitudes of size $500 \times 500$. The effect of $\alpha$ on the realizations is similar to that reported in Example 1.

\begin{figure}
  \centering
\includegraphics[scale=0.1]{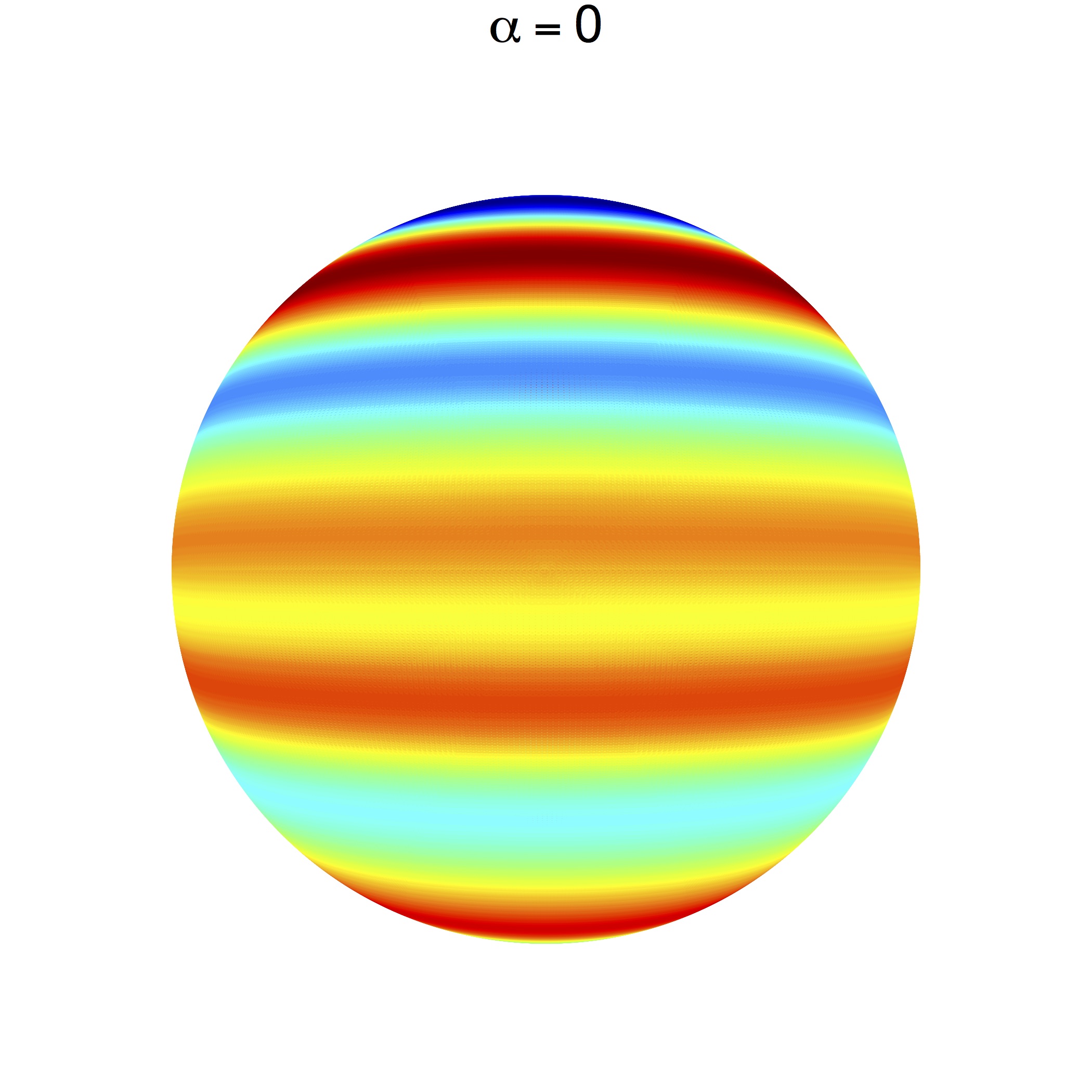}     \includegraphics[scale=0.1]{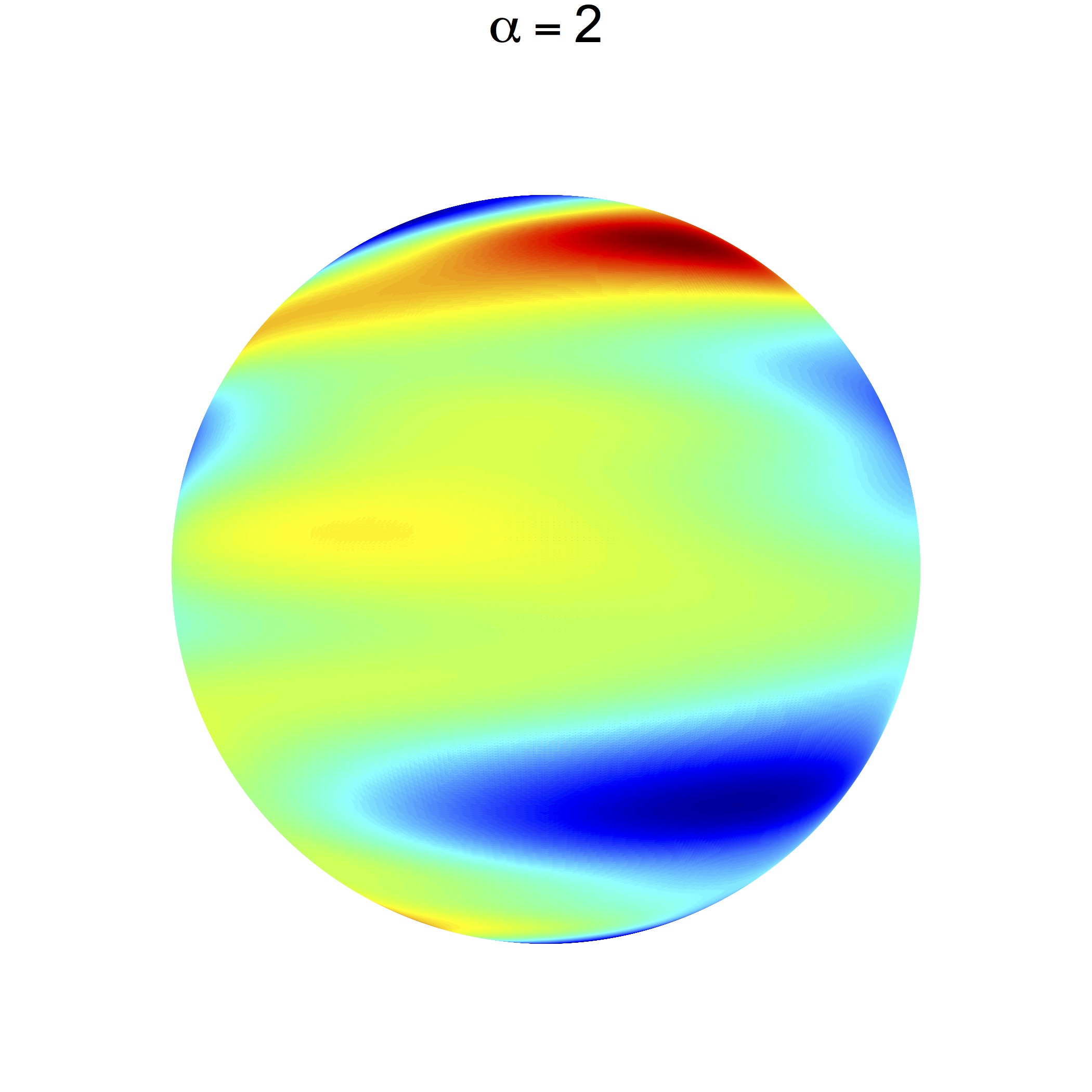}
   \includegraphics[scale=0.1]{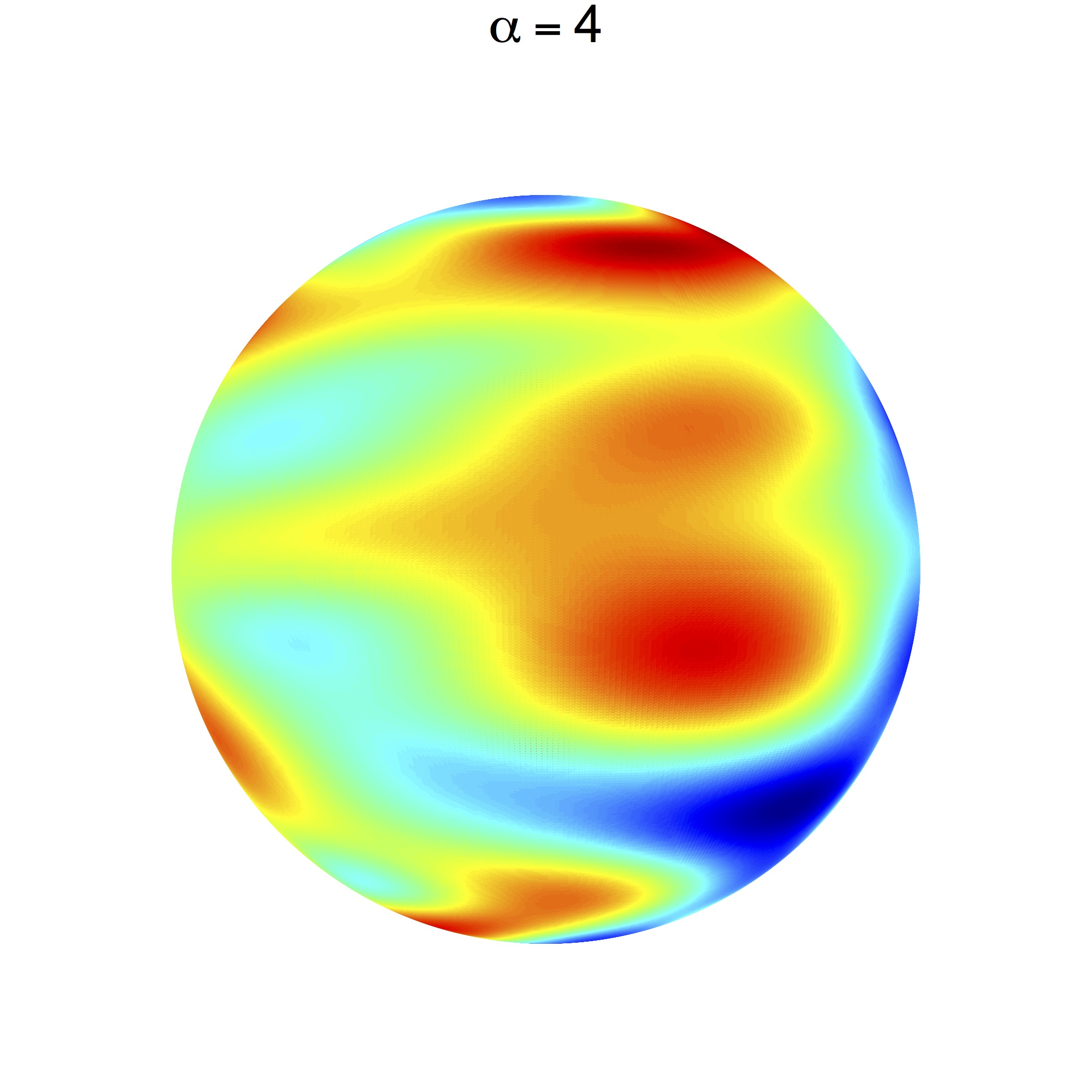}        \includegraphics[scale=0.1]{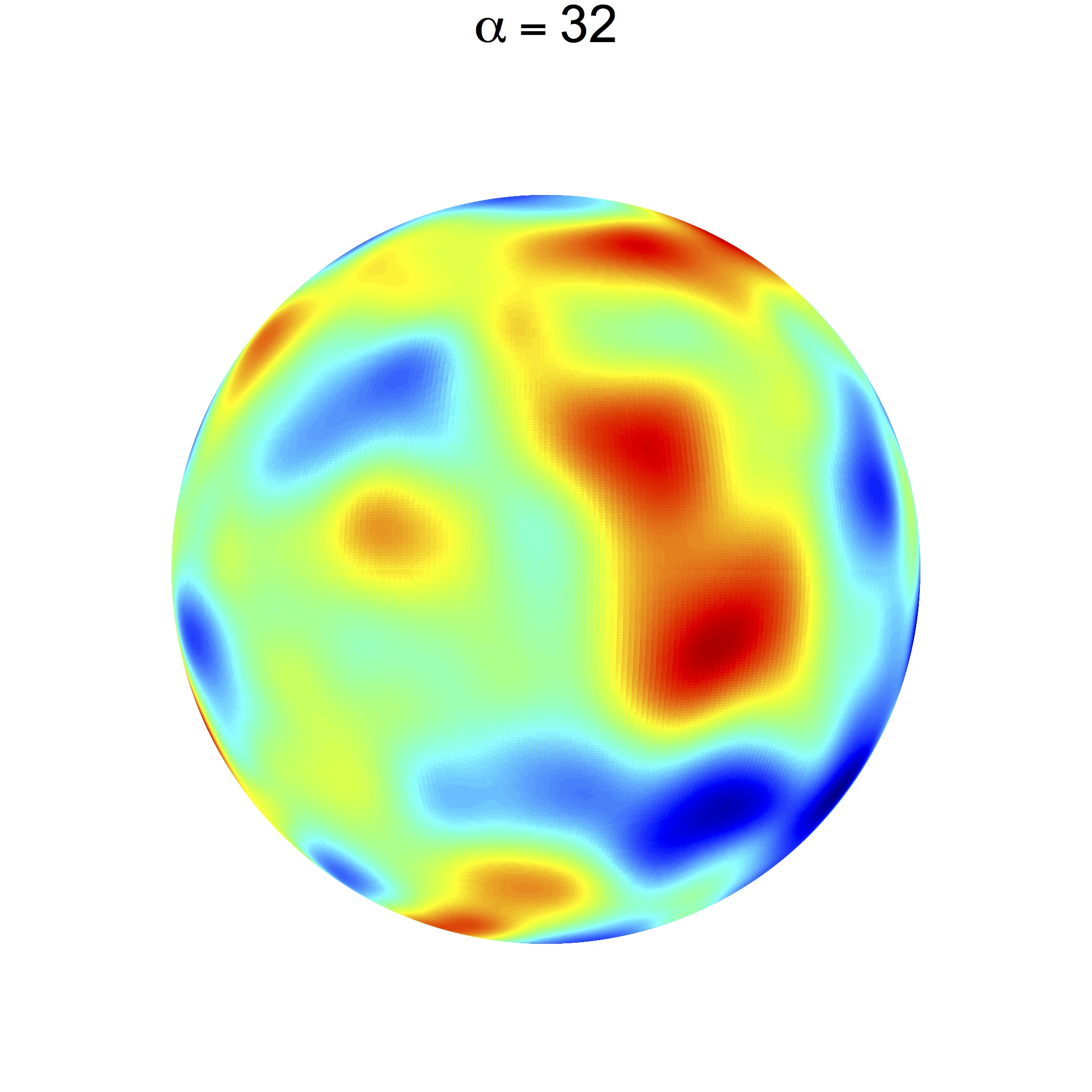}
\caption{Simulated processes on a grid of longitudes and latitudes of size $500\times 500$, with $N=200$, and the covariance function described in Example 2, with $\delta=0.7$, and different values for $\alpha$. The same random seed has been used for each realization.}
        \label{fig_multiq}
\end{figure}

\item[\bf Example 3.] Unlike the previous examples, where $\rho$ has been taken as a Kronecker delta, we now explore the effect of a different correlation on the realizations of the process. We consider the multiquadric coefficients (\ref{multiq_seq}), $\lambda_m = 1_{[0,\alpha]}(m)$ and an exponential correlation $\rho(h) = \exp(-\phi |h|)$, where $\phi$ is a positive parameter. Figure \ref{fig_ej3} reports realizations with $\delta=0.7$, $\alpha = 4$, and different values for $\phi$. Again, we set $N = 200$ and a spatial grid of size $500 \times 500$. As $\phi$ increases, the range of the correlation decreases, and we obtain a behavior that is similar to that reported in Example 2, as expected. However, as $\phi$ decreases, we observe realizations that are characterized by strong correlations along latitudes.

\begin{figure}
  \centering
\includegraphics[scale=0.1]{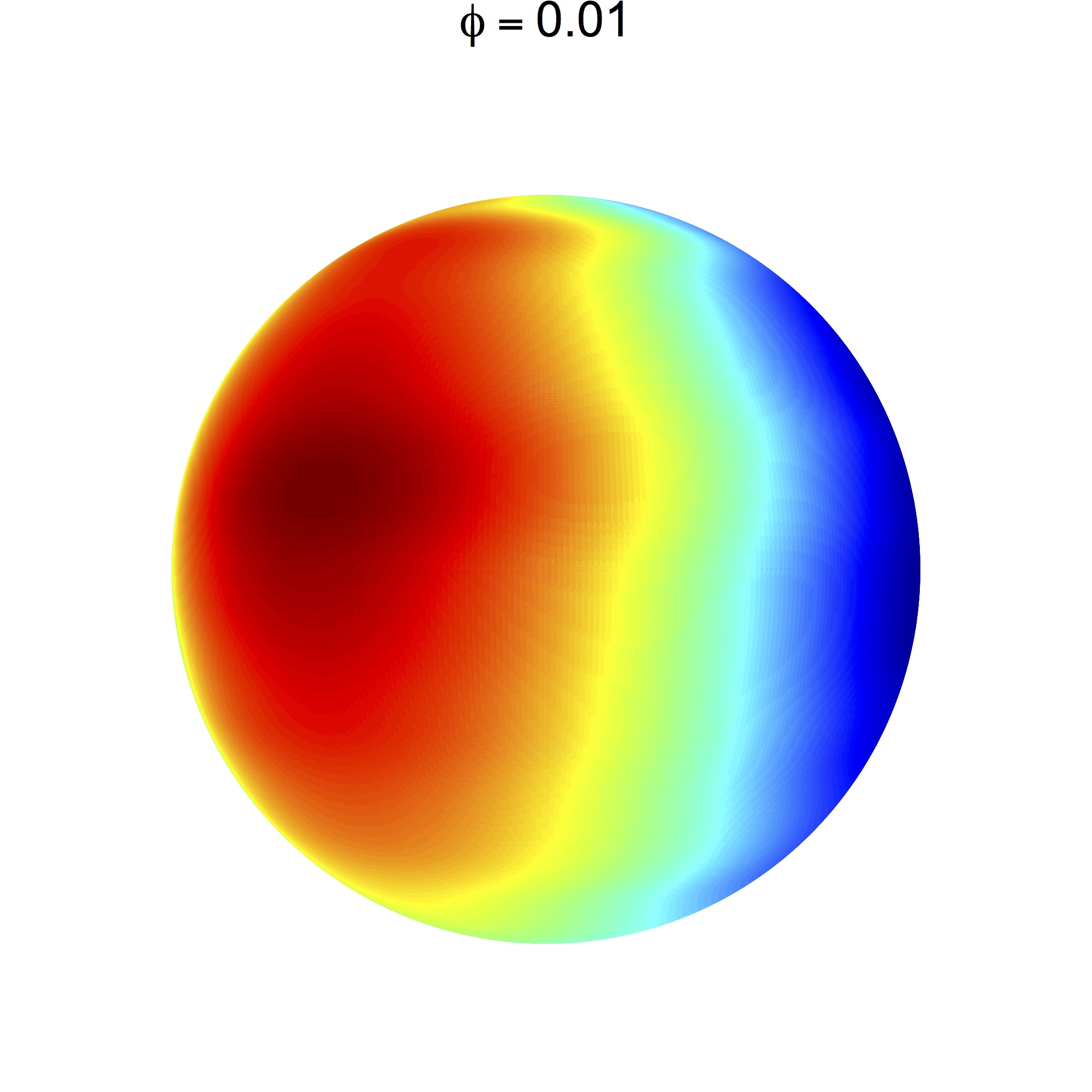}     \includegraphics[scale=0.1]{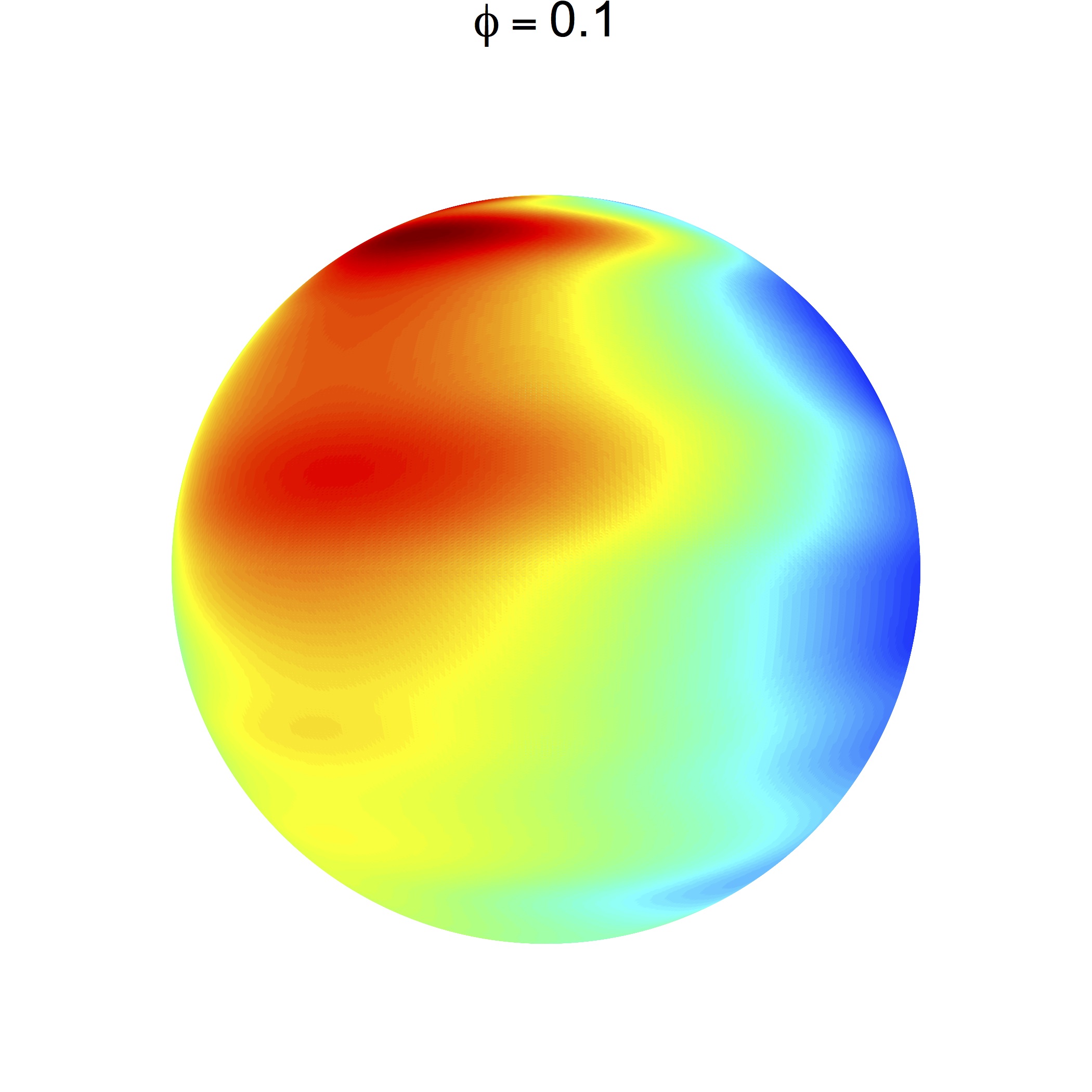}
   \includegraphics[scale=0.1]{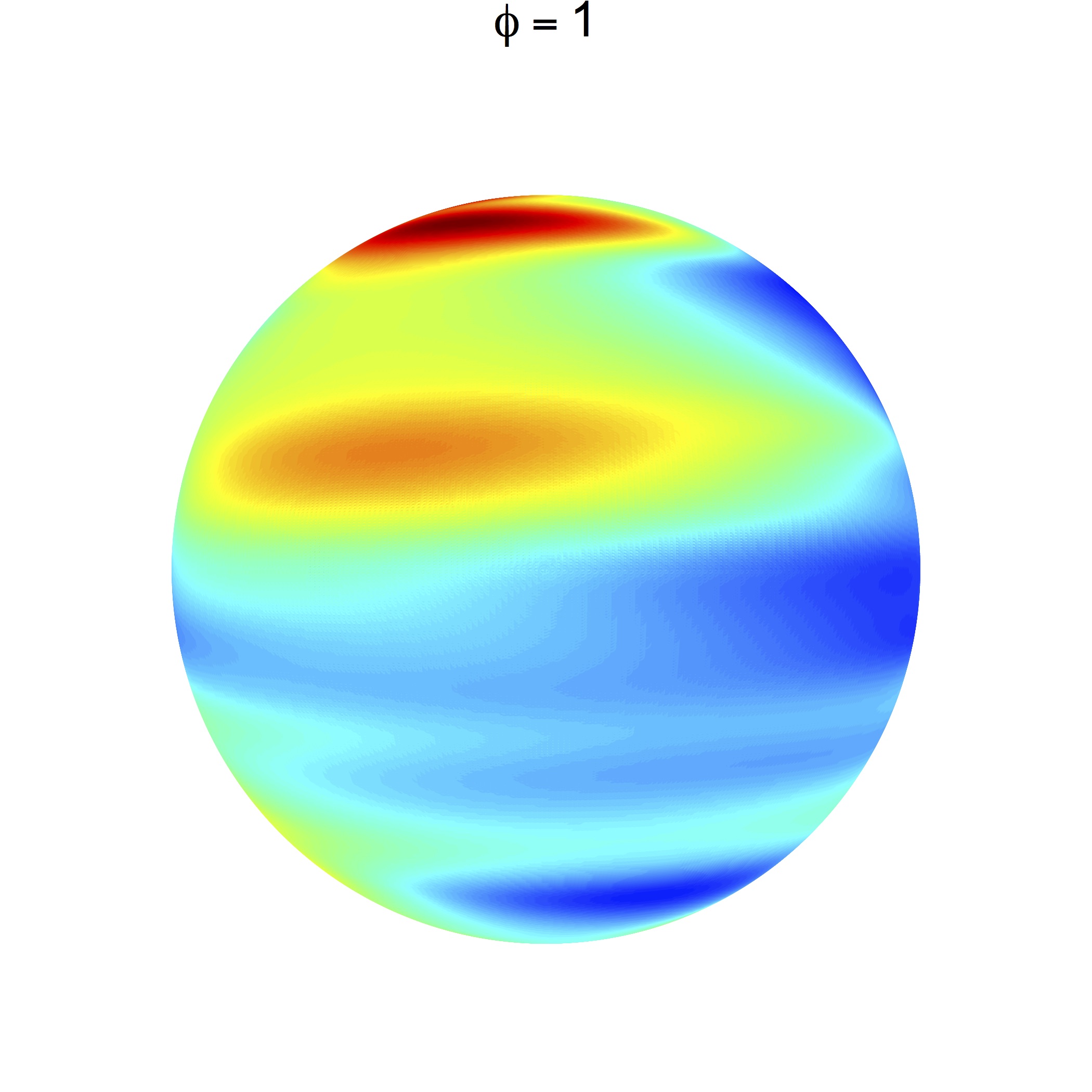}
\caption{Simulated processes on a grid of longitudes and latitudes of size $500\times 500$, with $N=200$, and the covariance function described in Example 3, with $\delta=0.7$, $\alpha=2$, and different values for $\phi$. The same random seed has been used for each realization.}
        \label{fig_ej3}
\end{figure}

\end{description}

For additional examples of sequences $\{\xi_n: n\in\mathbb{N}_0\}$, we refer the reader to \cite{ma2012stationary}, \cite{terdik2015angular} and \cite{leonenko2018estimation}.

\subsection{Illustrating the Effect of the Antisymmetric Part}
\label{ilustracion-antisymmetric}

The aim of this section is to show the impact of $g_m(n,n')$ on the covariance function of an axially symmetric process. Figure \ref{contorno2} shows the contour plots of the covariance function given in Example 1 after adding the antisymmetric part (\ref{parte-antisimetrica}) in terms of $L_2$ and $\Delta\ell$ for different fixed values of $L_1 = \pi/3, \pi/2, 2\pi/3$. We set $\tau^2=100$, $\nu=1.5$ and $\alpha = 8$. Each panel considers $\Delta\ell$ varying in the range $[-0.2,0.2]$, whereas $L_2$ varies in the range $[L_1-0.2,L_1+0.2]$. The first row corresponds to a longitudinally reversible process ($\kappa=0$), whereas the second and third rows depict the distortion produced by $g_m(n,n')$ with $\kappa = 1$ and $\kappa=-1$, respectively. We detect a rotation of the contour plots when the parameter $\kappa$ is different from zero. The orientation of this rotation depends on the sign of $\kappa$.

\begin{figure}
  \centering
\includegraphics[scale=0.25]{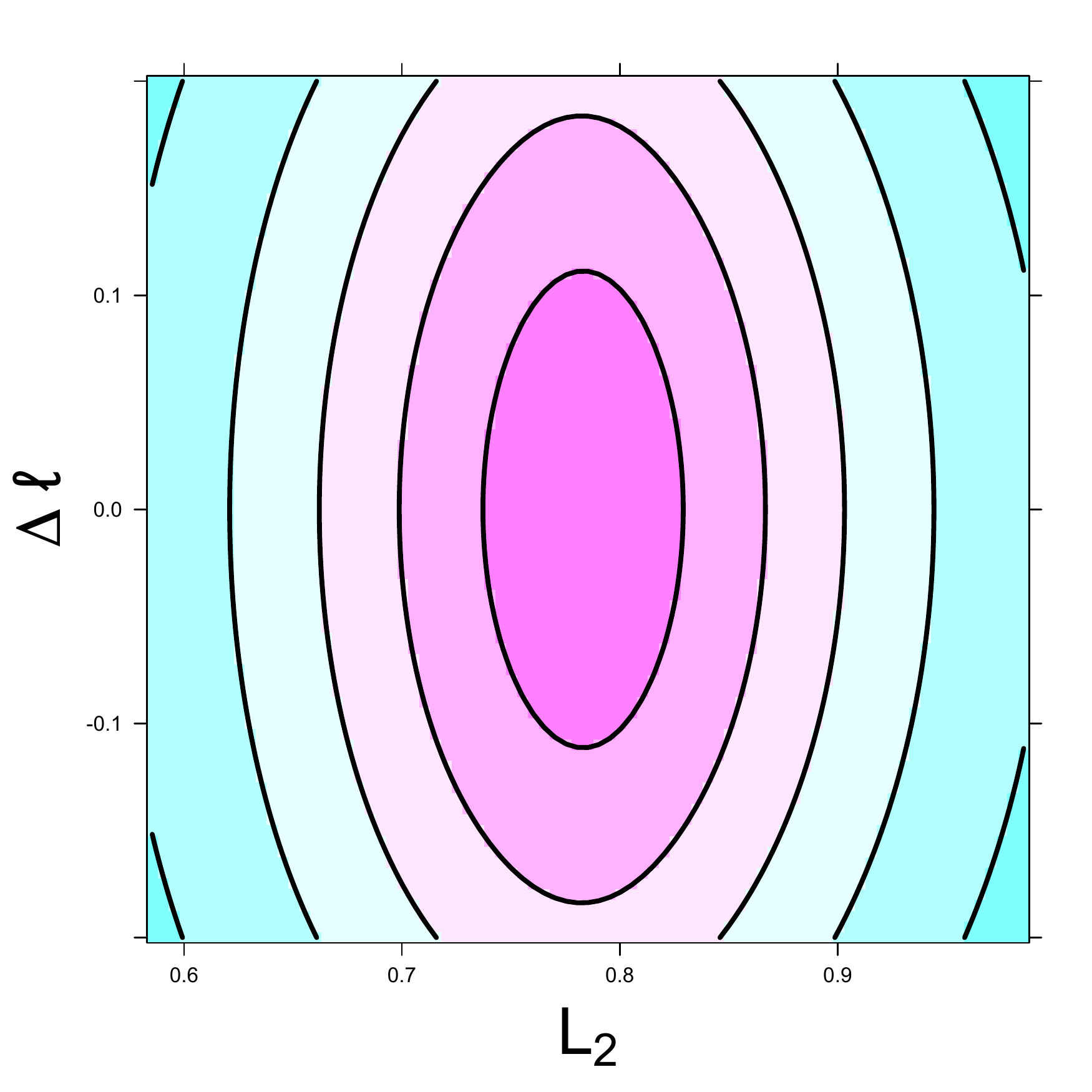}     \includegraphics[scale=0.25]{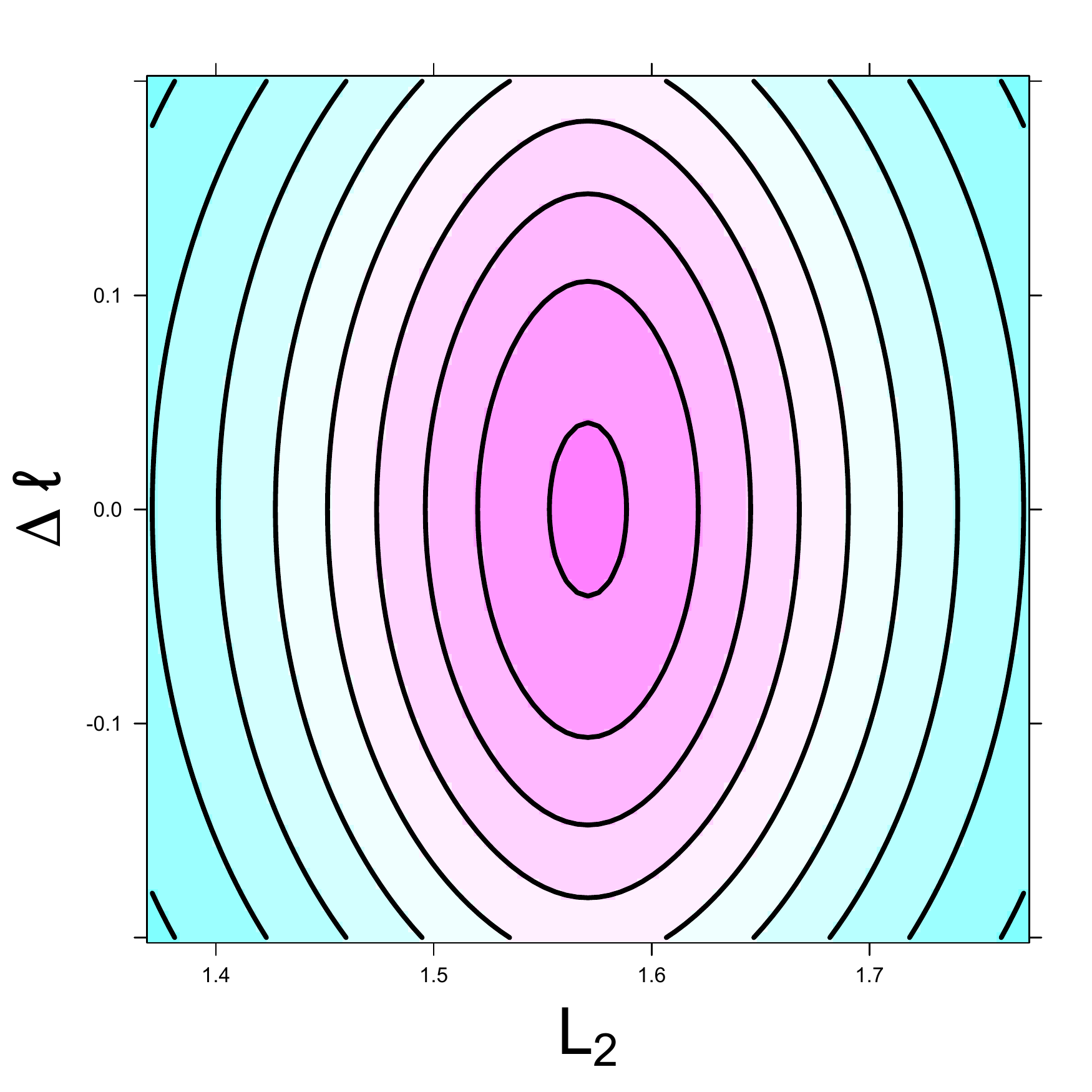}       \includegraphics[scale=0.25]{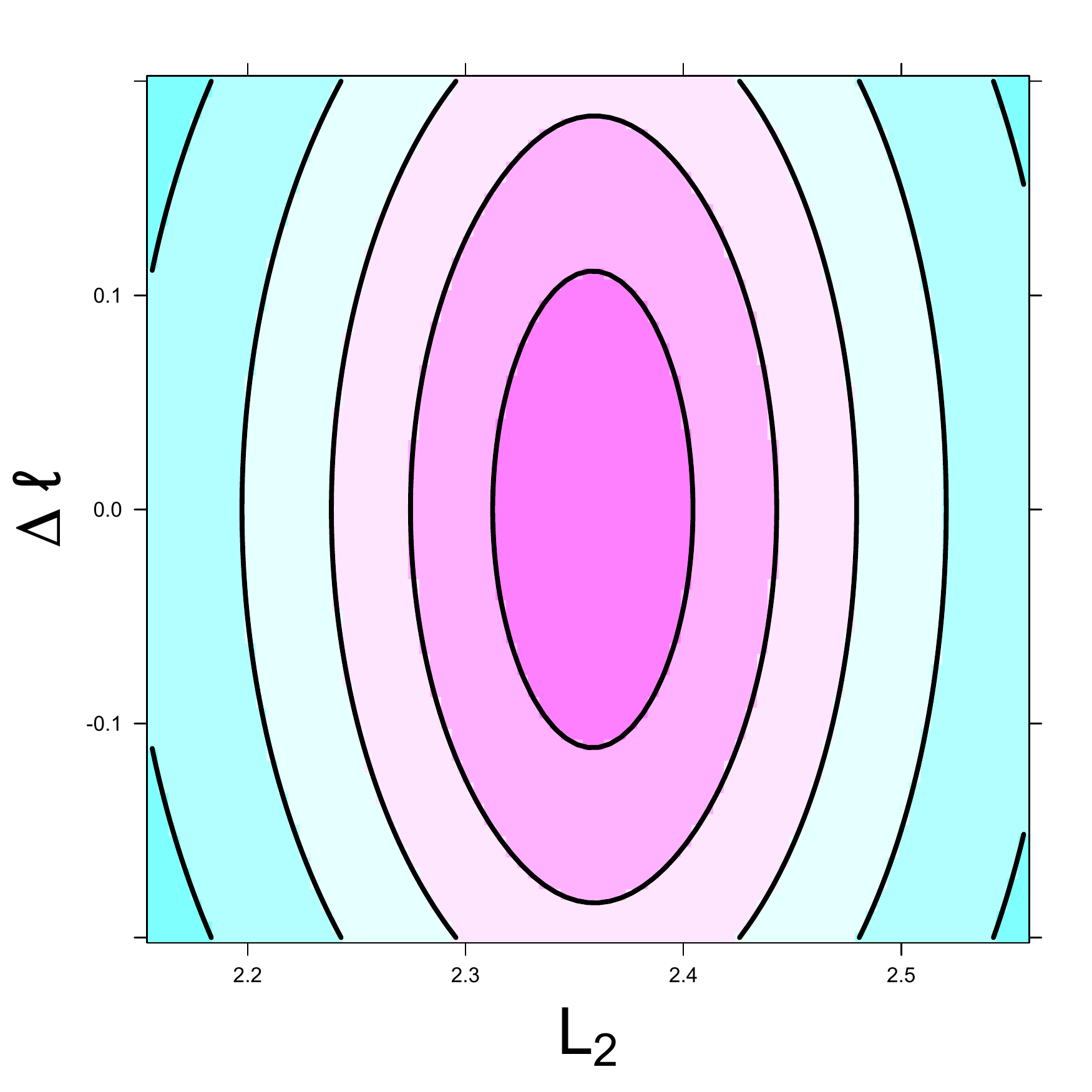}   \\

\includegraphics[scale=0.25]{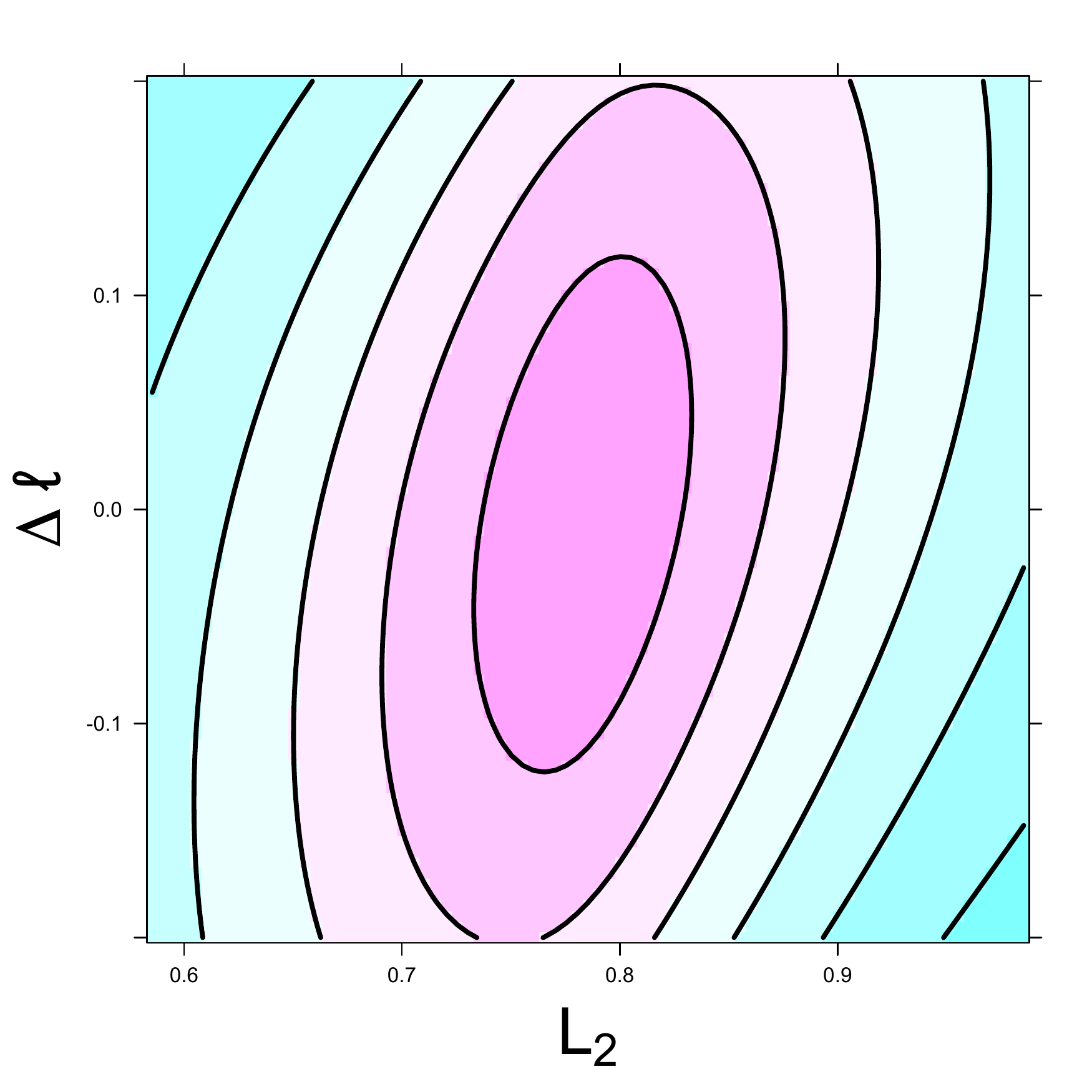}     \includegraphics[scale=0.25]{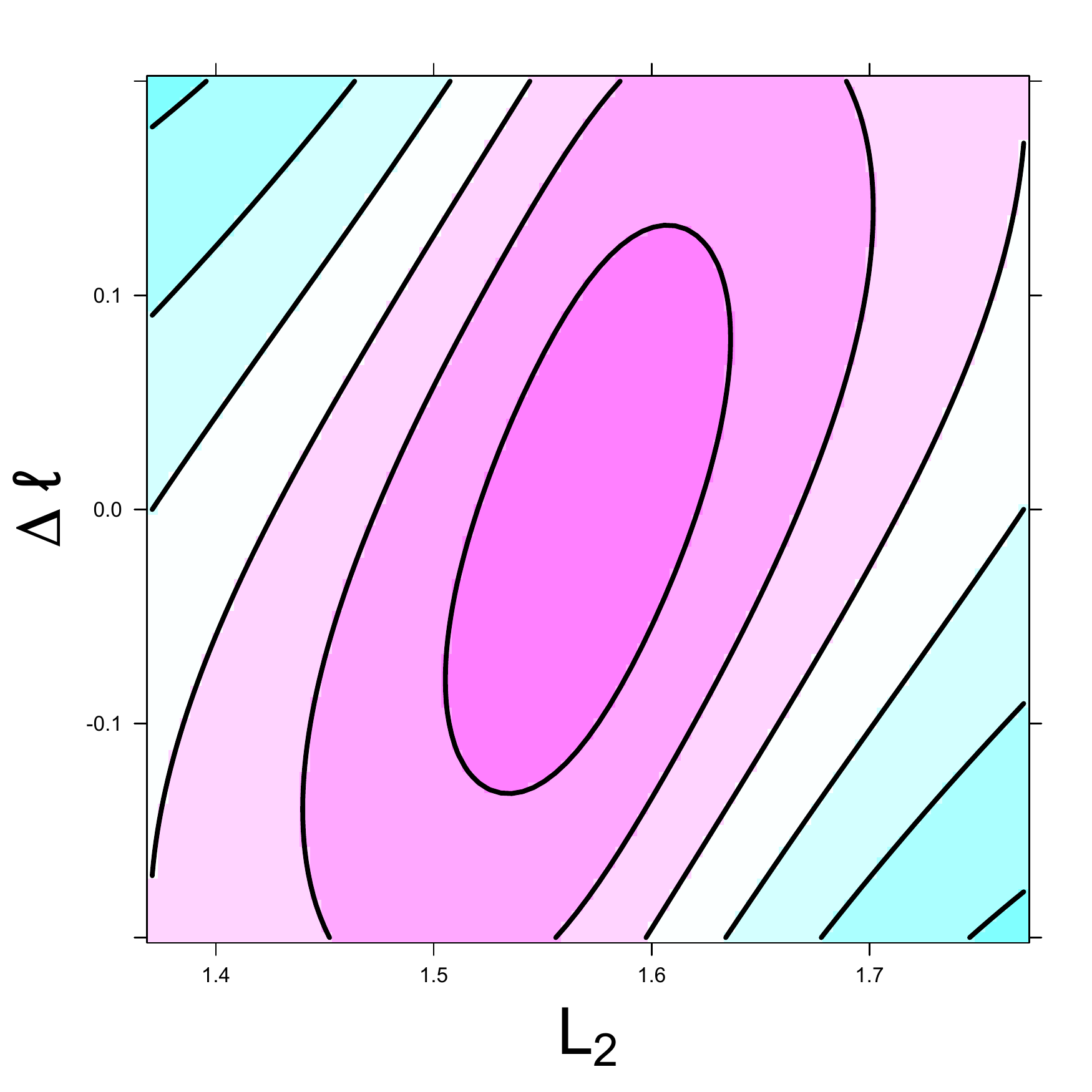}       \includegraphics[scale=0.25]{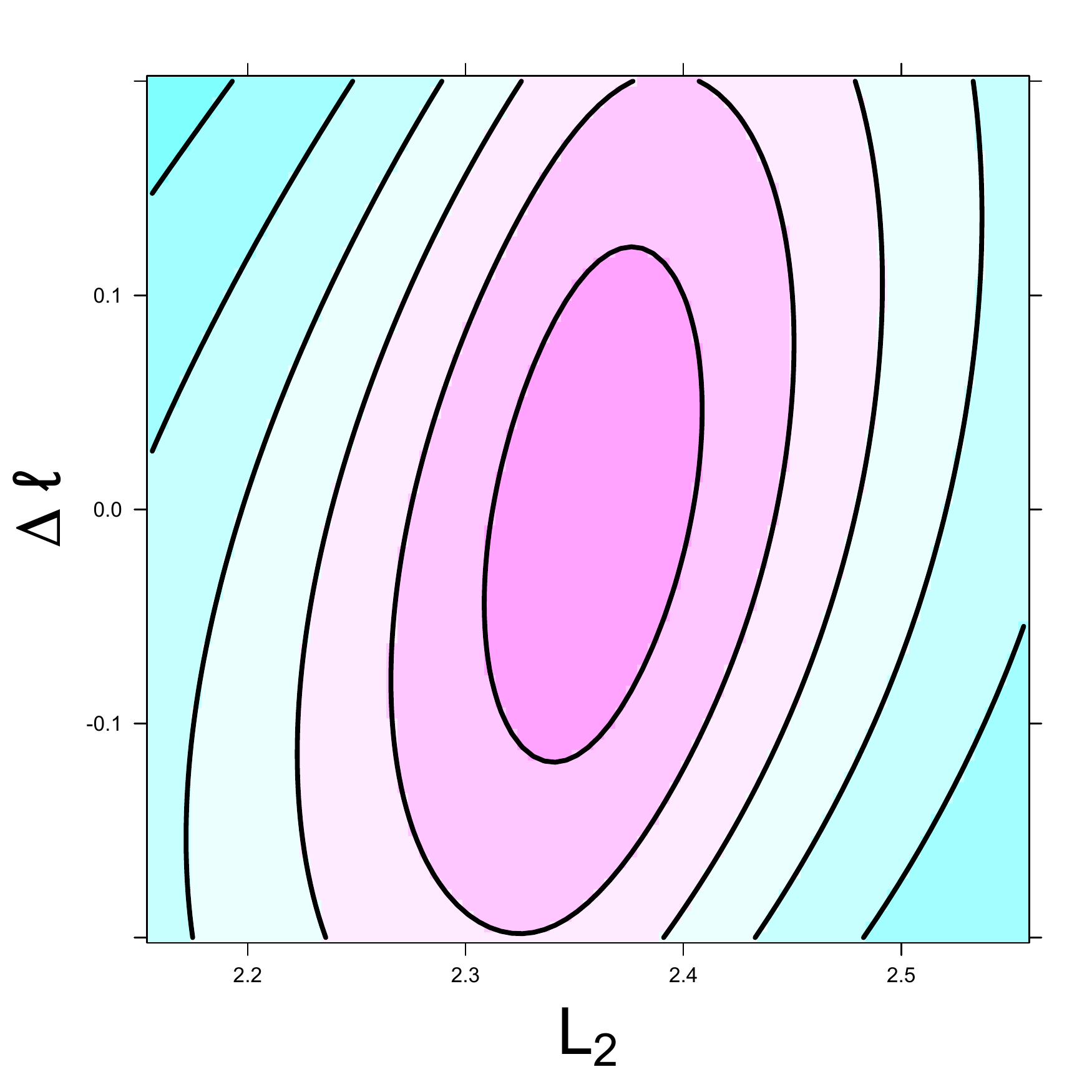}

\includegraphics[scale=0.25]{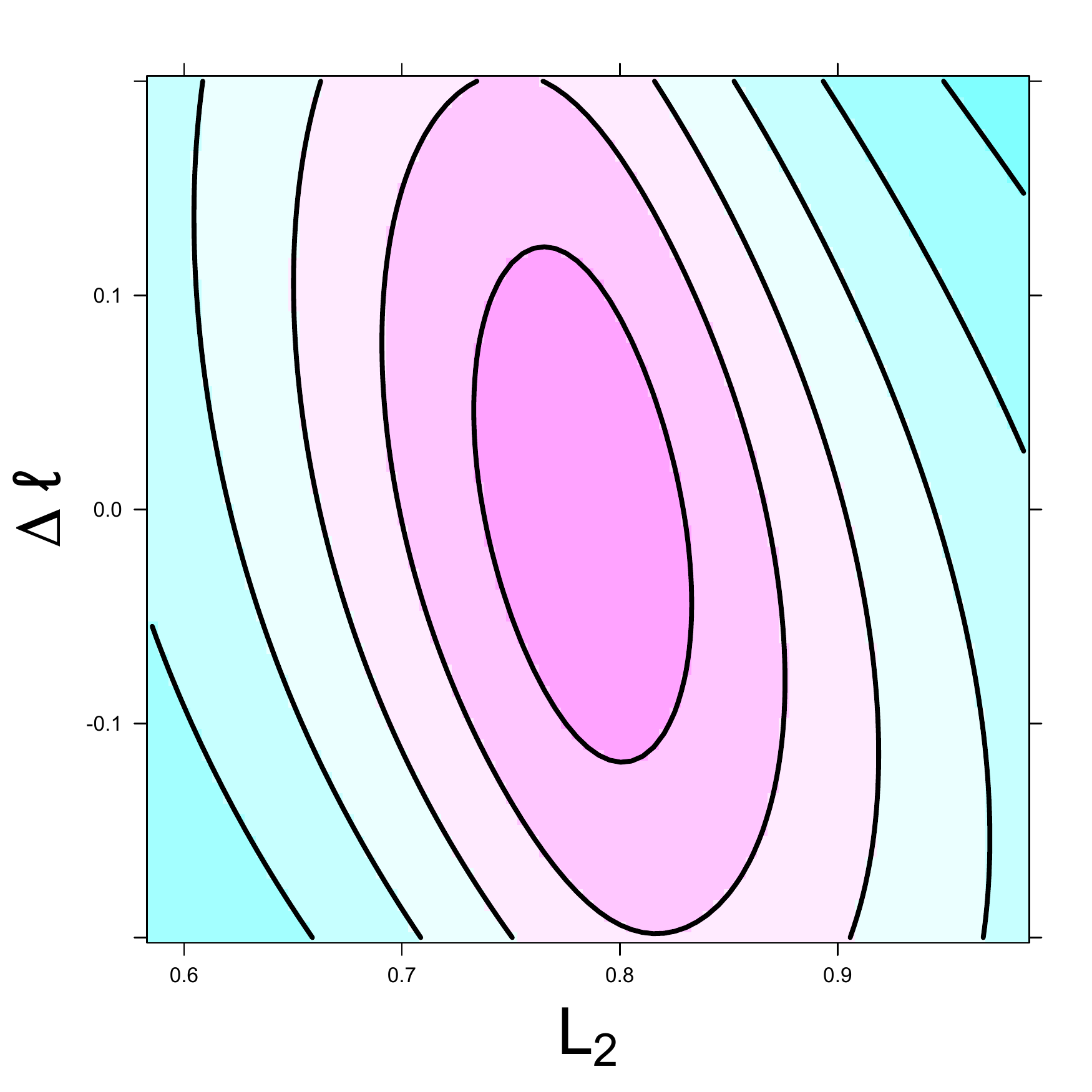}     \includegraphics[scale=0.25]{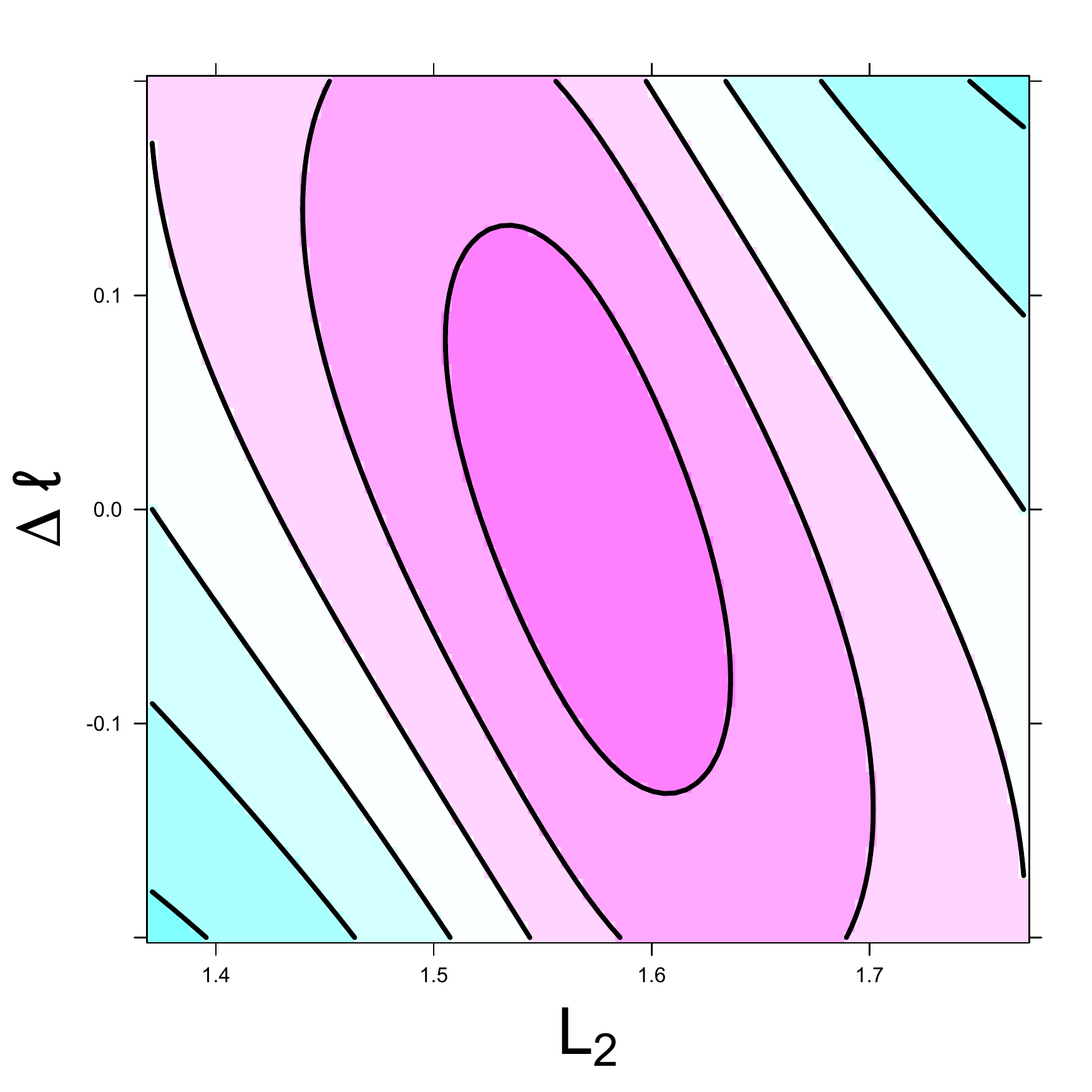}       \includegraphics[scale=0.25]{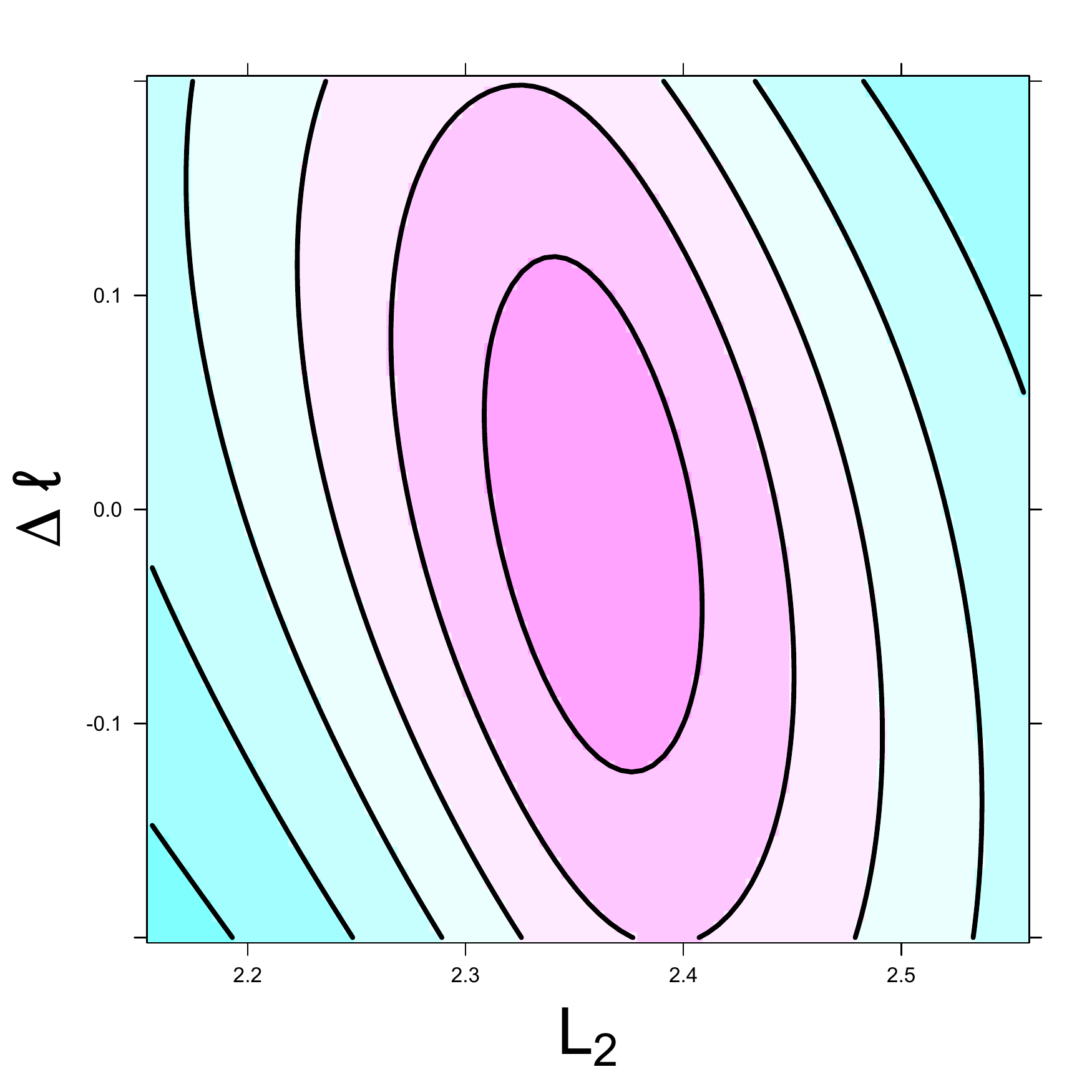}

\caption{Contour plots of the covariance function given in Example 1 after adding the antisymmetric part (\ref{parte-antisimetrica}) in terms of $L_2$ and $\Delta \ell$ for fixed values of $L_1 = \pi/3, \pi/2, 2\pi/3$ (from left to right). Here, $\tau^2=100$, $\nu=1.5$ and $\alpha = 8$. The first row corresponds to a longitudinally reversible process ($\kappa=0$), whereas the second and third rows correspond to longitudinally irreversible processes with $\kappa = 1$ and $\kappa=-1$, respectively. Light blue colors indicate lower values.}
        \label{contorno2}
\end{figure}

Figure \ref{contorno3} shows the contour plots of this covariance function in terms of $L_1$ and $L_2$ for different fixed values of $\Delta\ell = -0.2,0,0.2$, with $\tau^2=100$, $\nu=1.5$, $\alpha = 8$, and $\kappa=1$. Each panel considers $(L_1,L_2)\in [1.2,1.9]^2$. It is clear that the antisymmetric part produces a shift in the contour plots. The covariance function differs for $\Delta\ell=-0.2$ and $\Delta\ell=0.2$, illustrating the longitudinal irreversibility generated by our proposal. This asymmetry could also be observed in the second and third rows of Figure \ref{contorno2} because the graphs do not reflect across the horizontal line given by $\Delta\ell = 0$.

\begin{figure}
  \centering
\includegraphics[scale=0.25]{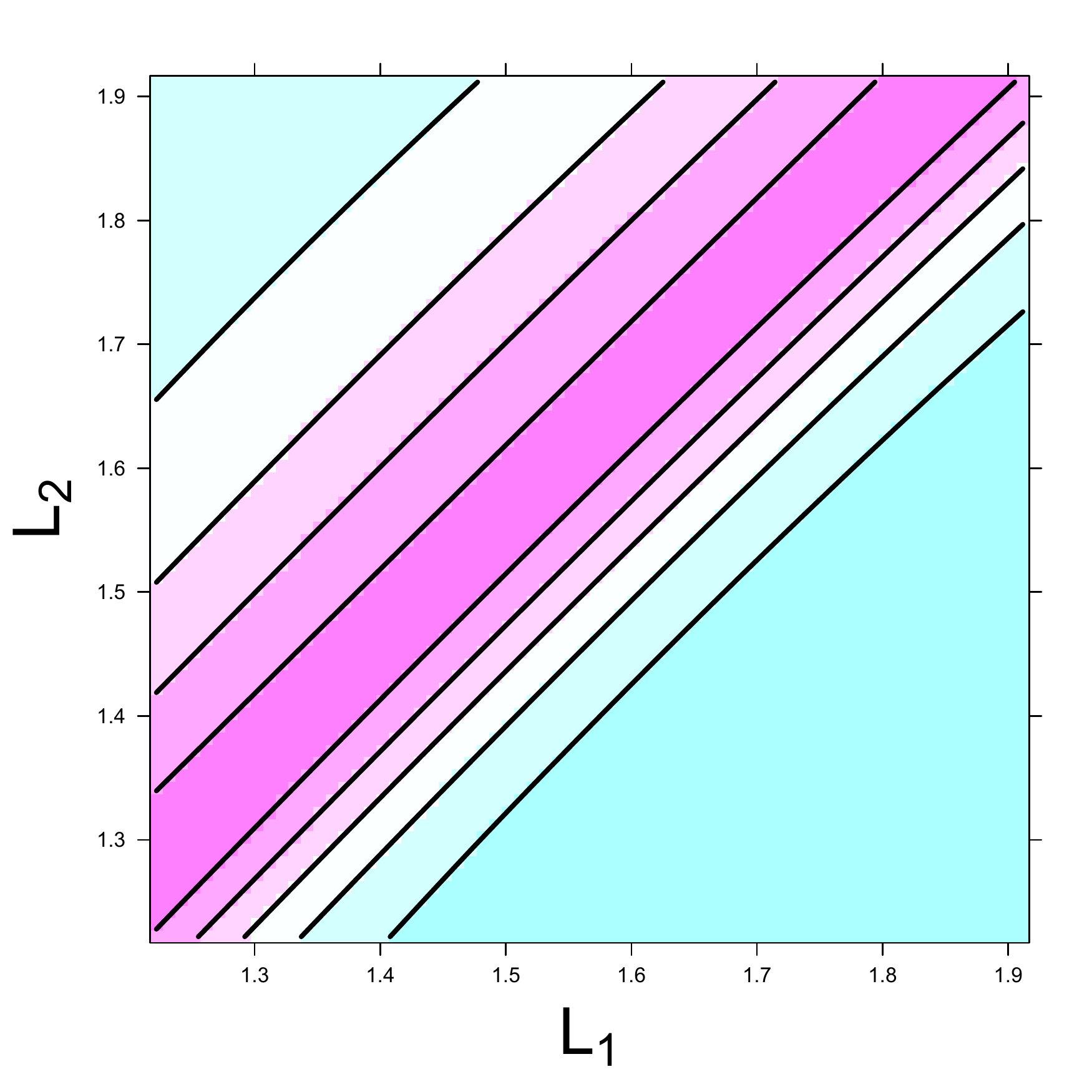}   \includegraphics[scale=0.25]{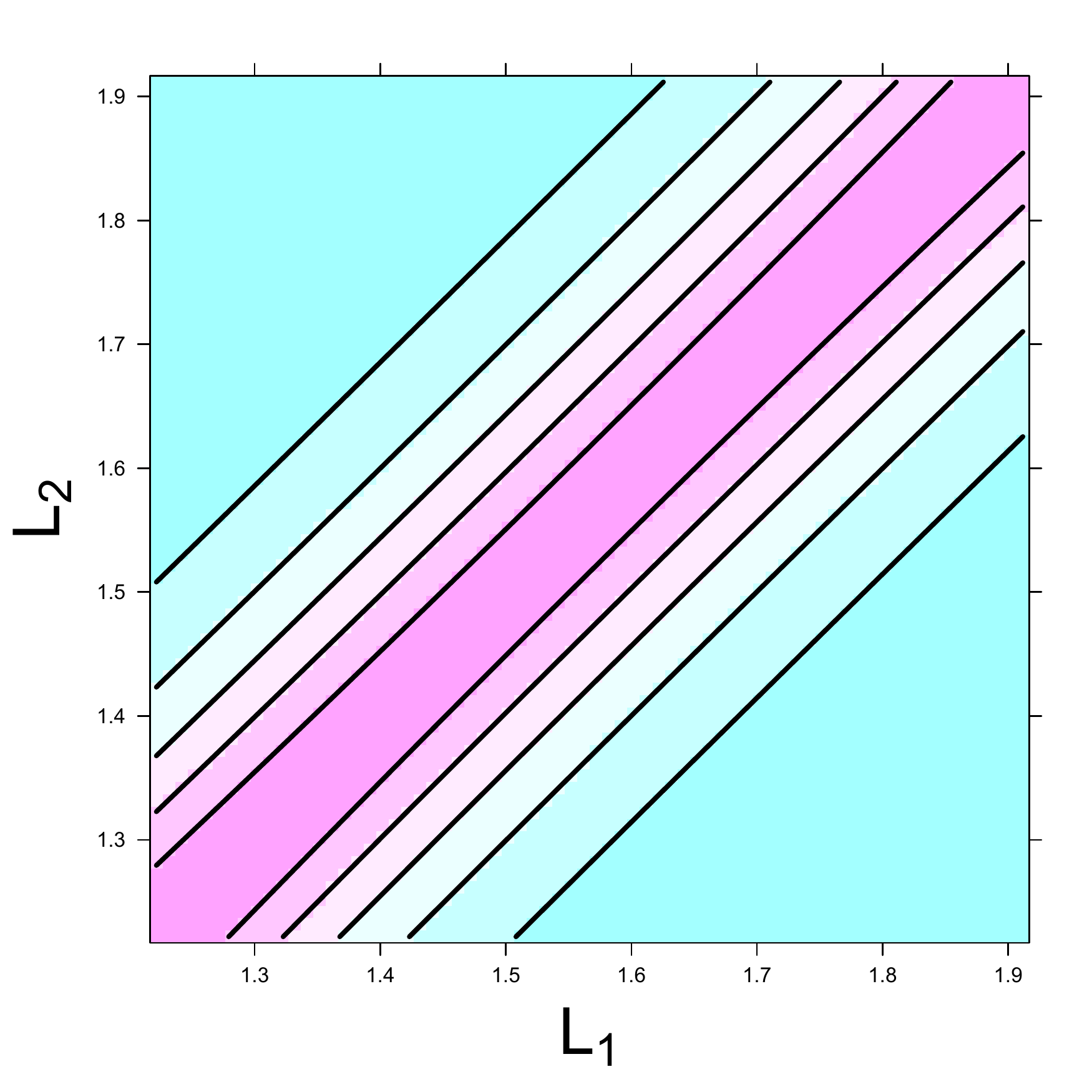}     \includegraphics[scale=0.25]{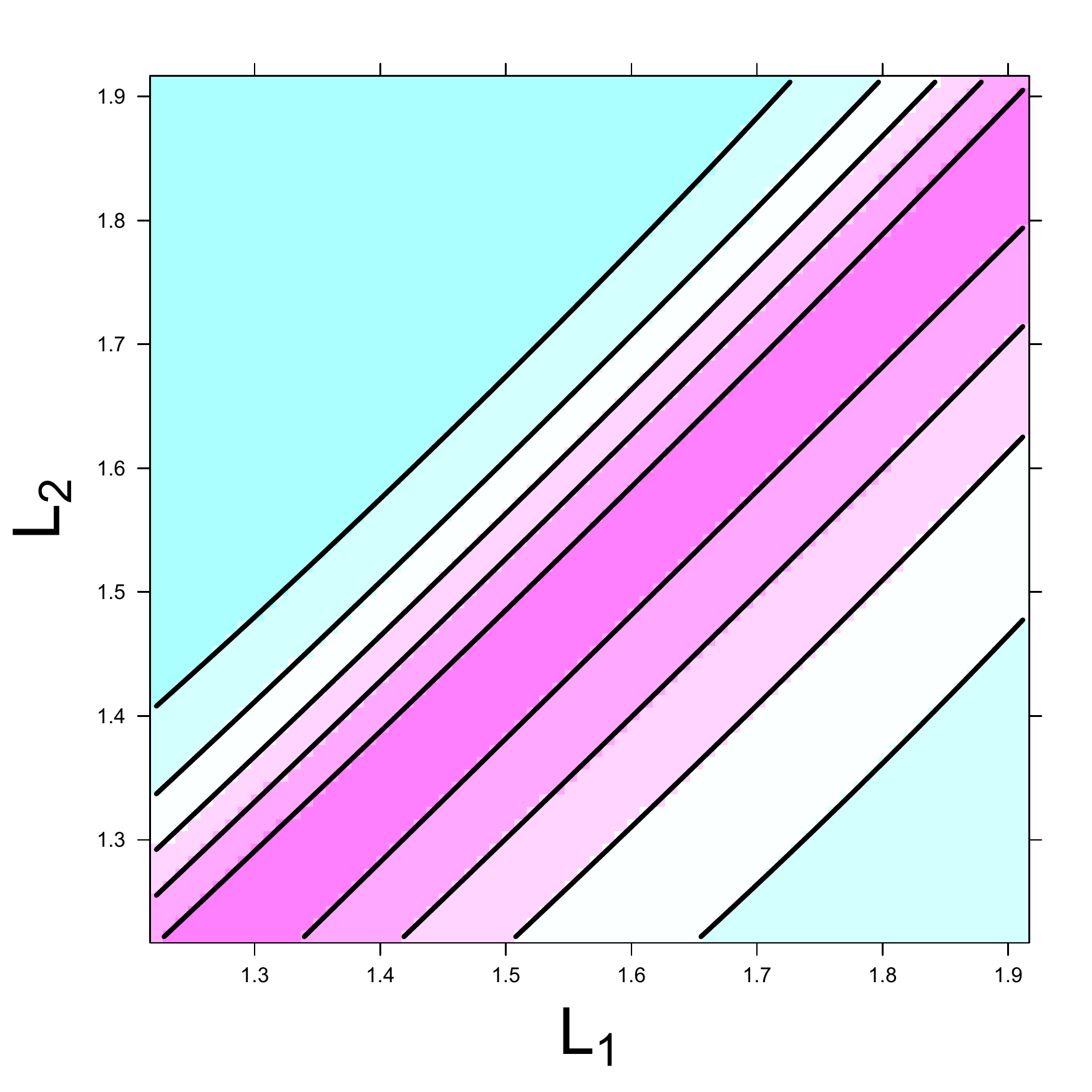}

\caption{Contour plots of the covariance function given in Example 1 after adding the antisymmetric part (\ref{parte-antisimetrica}) in terms of $L_1$ and $L_2$, for $\Delta \ell = -0.2$ (left), $\Delta \ell = 0$ (middle) and $\Delta \ell = 0.2$ (right). Here, $\tau^2=100$, $\nu=1.5$, $\alpha = 8$, and $\kappa=1$. Light blue colors indicate lower values.}
        \label{contorno3}
\end{figure}

In Figure \ref{fig2}, we report the realizations from the covariance function given in Example 1 after adding $g_m(n,n')$ as in (\ref{parte-antisimetrica}). Specifically, we consider $(\alpha, \kappa) \in \{2, 4\}\times \{0, 1\}$. The inclusion of the antisymmetric part generates processes with a stronger anisotropy in the northwest direction than in other directions. This behavior is consistent with the contour plots previously reported.

\begin{figure}
  \centering
\includegraphics[scale=0.1]{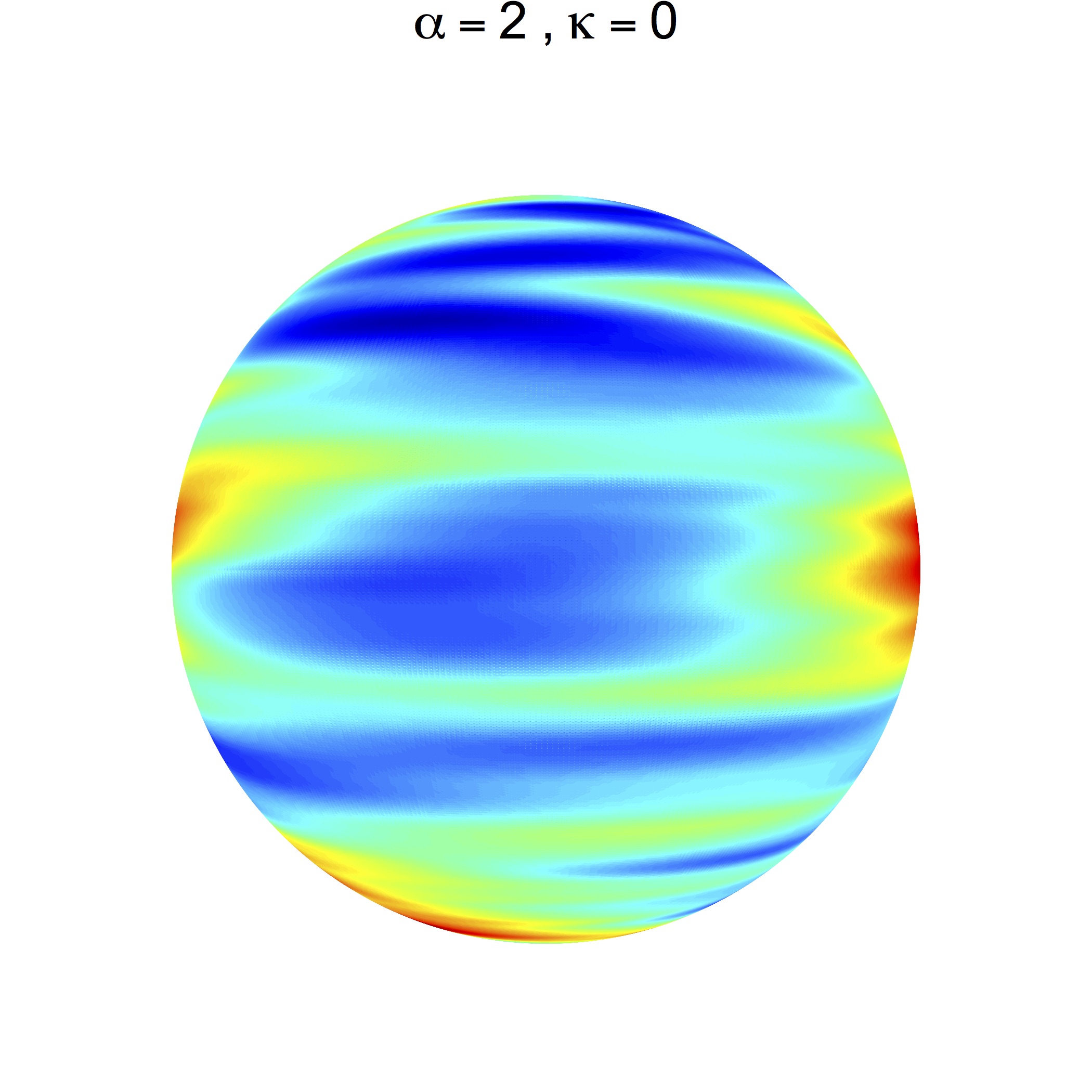}    \includegraphics[scale=0.1]{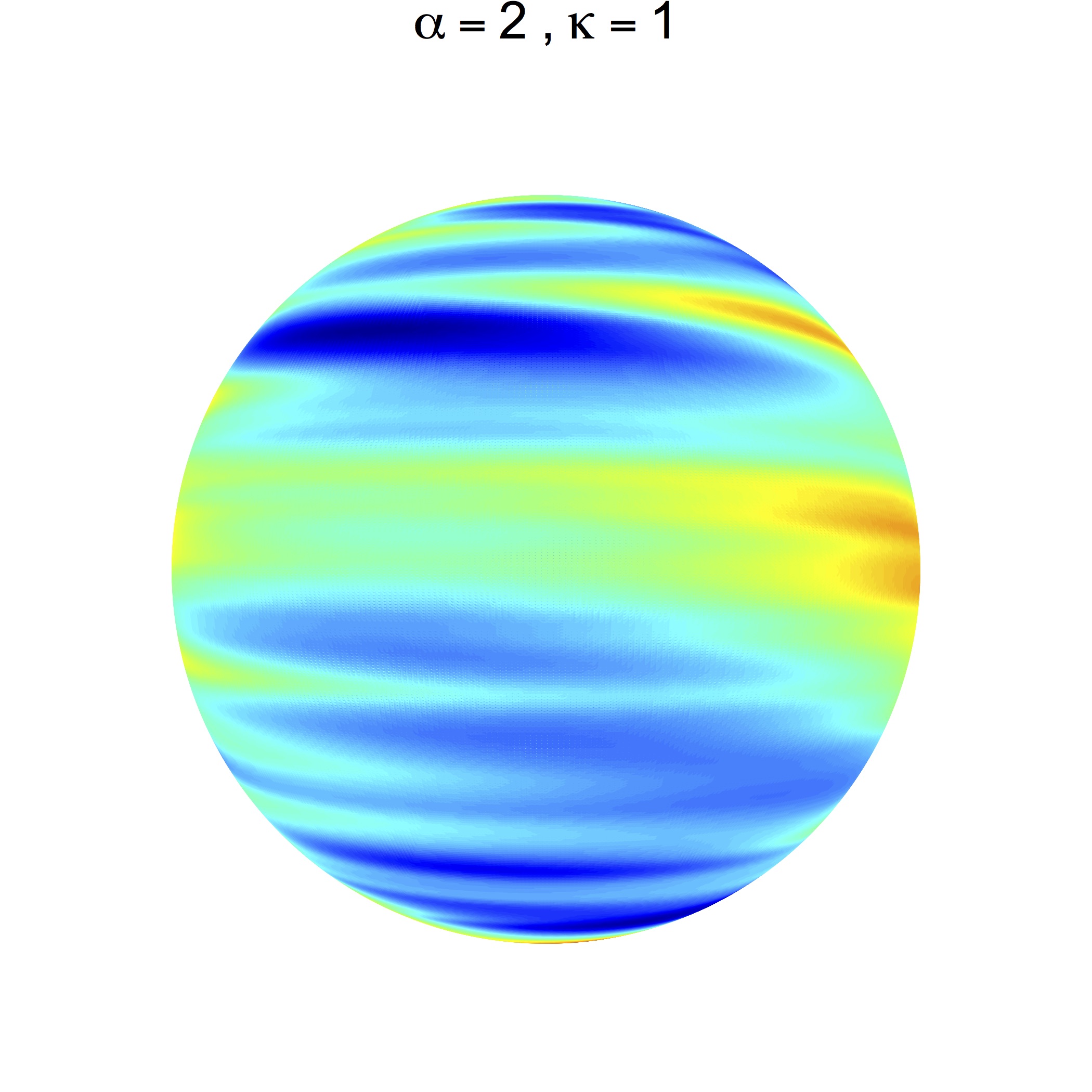}
\includegraphics[scale=0.1]{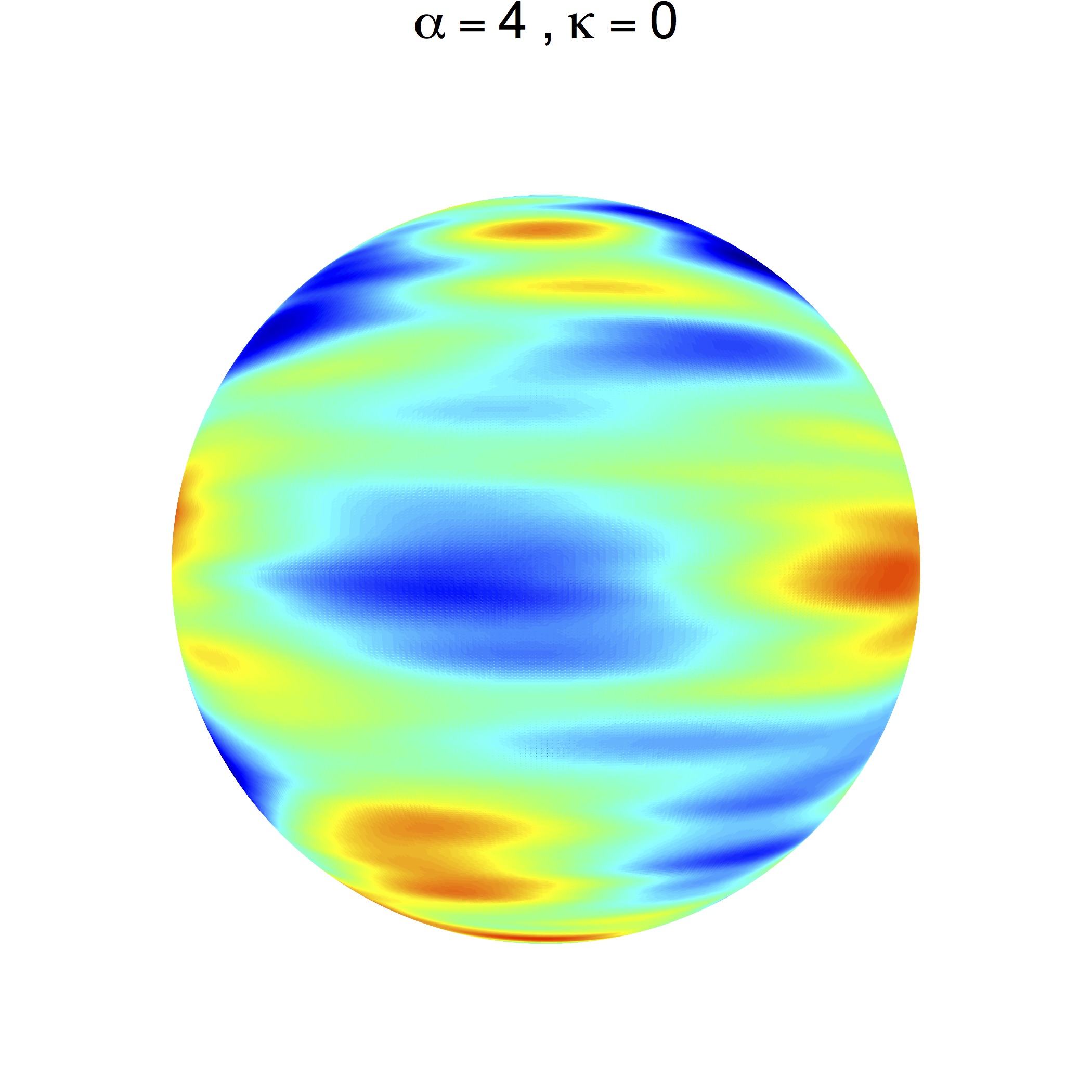}    \includegraphics[scale=0.1]{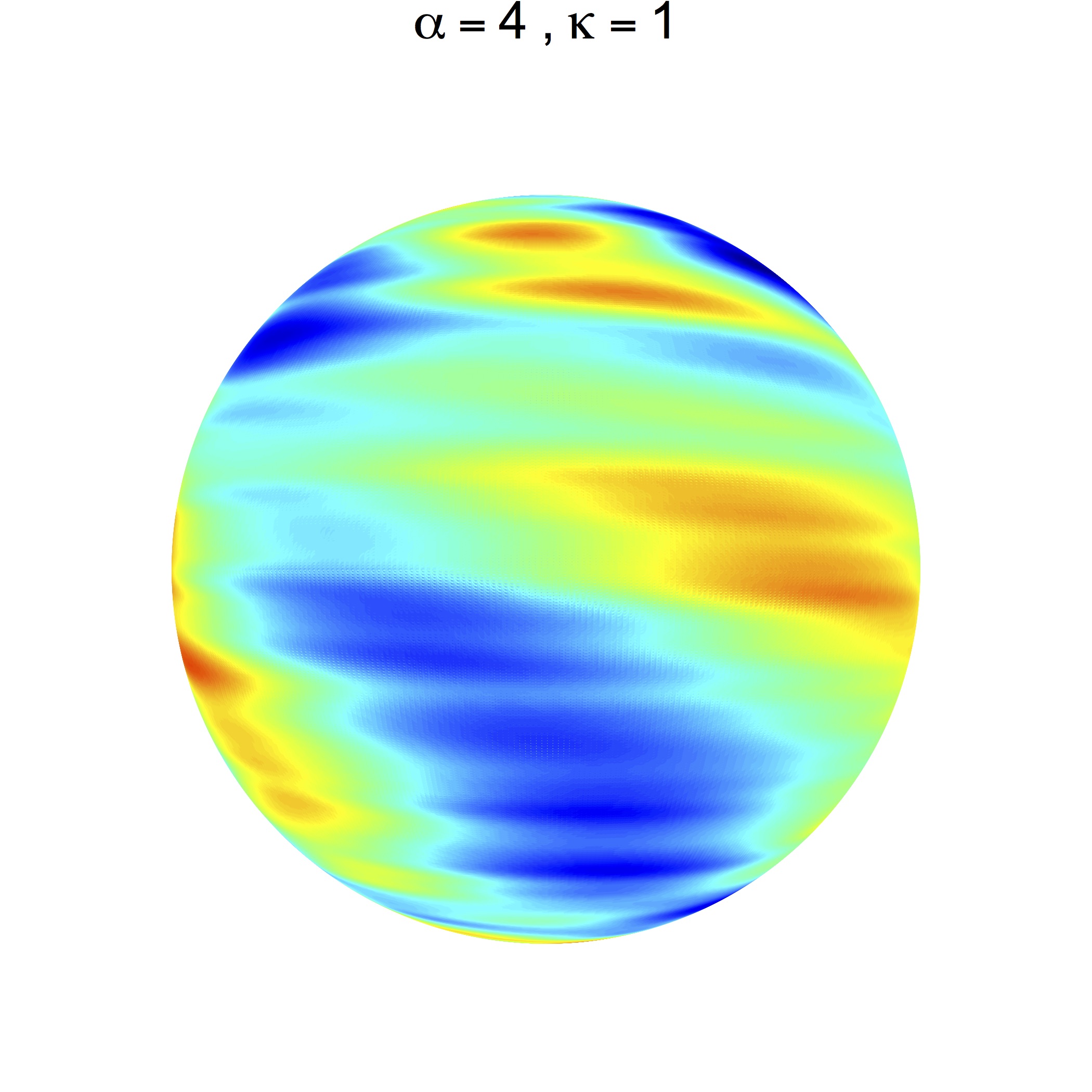}
\caption{Simulated processes on a grid of longitudes and latitudes of size $500\times 500$, with $N=200$, and the covariance function in Example 1 after adding the antisymmetric part (\ref{parte-antisimetrica}). We set $\nu = 1.5$, $\tau^2 = 100$, and different values for $\alpha$ and $\kappa$. The same random seed has been used for each realization.}
        \label{fig2}
\end{figure}

\subsection{Assessing the Accuracy of the Simulation Algorithm}

We turn to a validation study of the quality of the simulation algorithm. We first study how the algorithm reproduces the theoretical variogram structure. We simulate $1000$ independent realizations over $1000$ spatial locations, considering the covariance function given in Example 1, with $\nu=1.5$, $\tau^2=100$ and $\alpha=10$. Then, the empirical local variograms are obtained for fixed latitudes and compared to the theoretical variograms. Figure \ref{fig3} displays the results for four distinct latitudes. Note that on average, the empirical variograms match the theoretical models. The variability of the empirical variograms increases as the longitudinal lag increases, which is commonly observed in practice (see, e.g., \citealp{cuevas2020fast}). Additionally, for latitudes close to the south pole, the variabilities of the empirical variograms are more severe than those far from the south pole.

We also verify the theoretical bound for the $L^2$-error via simulations. We consider the same setting as in the previous examples, i.e.; in particular we have that $\nu=1.5$, which corresponds to a quadratic algebraic decay of the error. The true process is taken as the Karhunen-Lo\`eve expansion with $N=1000$, since for larger $N$, we do not observe substantial variations. Then, we progressively truncate the expansion at different values of $N$ and look at the decay of the error on a logarithmic scale. Following \cite{lang2015isotropic}, instead of the $L^2$-error in space, we quantify a stronger error given by the maximum error over all grid points. Figure \ref{fig4} shows that, on average, we obtain the expected convergence rate for $1000$ independent repetitions of this experiment. More precisely, the empirical convergence rate is given by $N^{-2.009}$, which is close to the theoretical convergence rate.

\begin{figure}
  \centering
\includegraphics[scale=0.35]{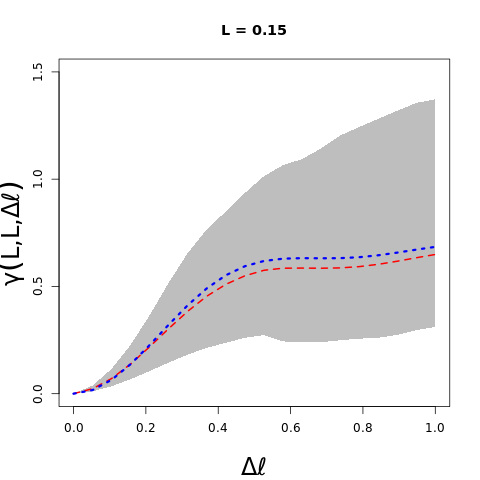}    \includegraphics[scale=0.35]{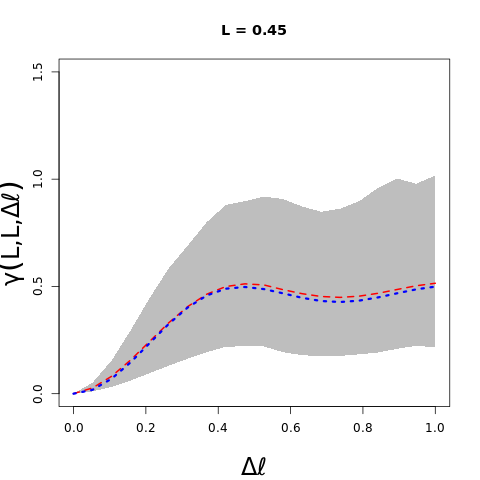}
\includegraphics[scale=0.35]{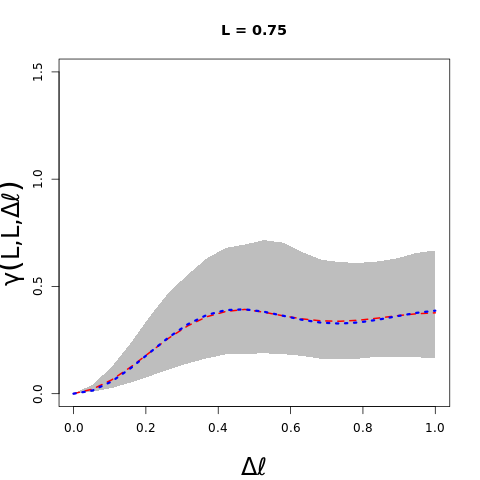}      \includegraphics[scale=0.35]{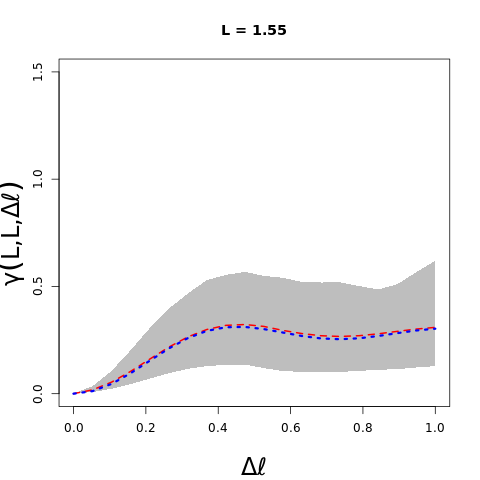}
\caption{Empirical variograms for $1000$ independent simulations from Example 1 versus theoretical variograms. Here, we consider four distinct latitudes and the parameters $\nu=1.5$, $\tau^2=100$ and $\alpha=10$. For each panel, the red dashed line is the theoretical variogram, the blue dotted line is the average empirical variogram, and the gray zones show the empirical variogram envelopes based on the $1000$ repetitions. }
        \label{fig3}
\end{figure}

\begin{figure}
\centering
\includegraphics[scale=0.36]{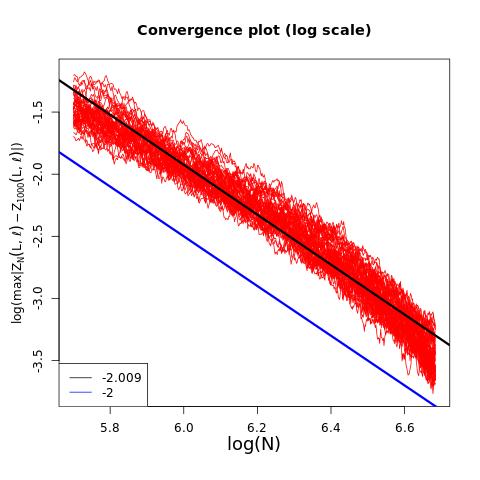}
\caption{Convergence rates in terms of $N$, in a logarithmic scale, for $1000$ independent realizations. The theoretical rate is given by the blue line. The average empirical rate, associated with the $1000$ repetitions, is given by the black line.}
\label{fig4}
\end{figure}

\section{Discussion}
\label{discussion}

In this paper, we discuss several aspects related to Karhunen-Lo\`eve expansions of axially symmetric Gaussian processes. We illustrate how to obtain the limit cases, isotropy and longitudinal independence, by means of an adequate choice of the Karhunen-Lo\`eve coefficients. We have also incorporated an antisymmetric coefficient, which allows for the parametric regulation of longitudinal reversibility. Bounds for the $L^2$-error associated with a truncated version of the Karhunen-Lo\`eve expansion have been derived. This weighted sum of finitely many spherical harmonic functions serves as a natural simulation strategy. Simulation experiments are performed that show the effectiveness of our proposal: (1) it reproduces the prescribed second-order dependency, and (2) the empirical convergence rate of the truncation error matches the theoretical one.

The investigation of more complex coefficients and their impact on the attributes of the axially symmetric process is an interesting topic that merits more attention. The smoothness and H\"older continuity properties could be explored in a similar fashion to the works of \cite{lang2015isotropic}, \cite{kerkyacharian2018regularity} and \cite{cleanthous2020regularity}. The simultaneous modeling of multiple correlated spatial processes on spheres, each one having an axially symmetric structure, is also a promising research direction. The findings of \cite{jun2011non}, \cite{alegria2019covariance} and \cite{Emery2019} might be useful here. Exploring axially symmetric processes that evolve temporally is another interesting topic.

Our findings are not limited to the simulation of axially symmetric processes and may certainly be used for both the modeling and prediction of global data. The search for accurate and efficient methods to estimate the parameters involved in our models is a challenging topic that we expect to tackle in the future.

\section*{Acknowledgments}

The research work of Alfredo Alegr\'ia was partially supported by Grant FONDECYT 11190686 from the Chilean government.

\bibliographystyle{apalike}
\bibliography{mybib}

\begin{thebibliography}{}

\bibitem[Abramowitz and Stegun, 1964]{abramowitz1988handbook}
Abramowitz, M. and Stegun, I.~A. (1964).
\newblock {\em Handbook of Mathematical Functions with Formulas, Graphs, and
  Mathematical Tables}.
\newblock Dover, New York.

\bibitem[Alegria et~al., 2018]{alegria2018family}
Alegria, A., Cuevas, F., Diggle, P., and Porcu, E. (2018).
\newblock A family of covariance functions for random fields on spheres.
\newblock {\em CSGB Research Reports, Department of Mathematics, Aarhus
  University}.

\bibitem[Alegr{\'\i}a et~al., 2020]{alegria2020turning}
Alegr{\'\i}a, A., Emery, X., and Lantu{\'e}joul, C. (2020).
\newblock The turning arcs: a computationally efficient algorithm to simulate
  isotropic vector-valued {G}aussian random fields on the $ d $-sphere.
\newblock {\em Statistics and Computing, in press}.

\bibitem[Alegr{\'\i}a et~al., 2019]{alegria2019covariance}
Alegr{\'\i}a, A., Porcu, E., Furrer, R., and Mateu, J. (2019).
\newblock Covariance functions for multivariate {G}aussian fields evolving
  temporally over planet earth.
\newblock {\em Stochastic Environmental Research and Risk Assessment},
  33(8-9):1593--1608.

\bibitem[Bissiri et~al., 2020]{bissiri2020}
Bissiri, P.~G., Peron, A.~P., and Porcu, E. (2020).
\newblock Strict positive definiteness under axial symmetry on the sphere.
\newblock {\em Stochastic Environmental Research and Risk Assessment},
  34:723--732.

\bibitem[Castruccio, 2016]{castruccio2016assessing}
Castruccio, S. (2016).
\newblock Assessing the {S}patio-{T}emporal {S}tructure of {A}nnual and
  {S}easonal {S}urface {T}emperature for {CMIP5} and {R}eanalysis.
\newblock {\em Spatial Statistics}, 18:179--193.

\bibitem[Castruccio and Genton, 2014]{castruccio2014beyond}
Castruccio, S. and Genton, M.~G. (2014).
\newblock Beyond axial symmetry: {A}n improved class of models for global data.
\newblock {\em Stat}, 3(1):48--55.

\bibitem[Clarke et~al., 2018]{de2018regularity}
Clarke, J., Alegr{\'\i}a, A., and Porcu, E. (2018).
\newblock Regularity properties and simulations of {G}aussian random fields on
  the sphere cross time.
\newblock {\em Electronic Journal of Statistics}, 12(1):399--426.

\bibitem[Cleanthous et~al., 2020]{cleanthous2020regularity}
Cleanthous, G., Georgiadis, A.~G., Lang, A., and Porcu, E. (2020).
\newblock Regularity, continuity and approximation of isotropic {G}aussian
  random fields on compact two-point homogeneous spaces.
\newblock {\em Stochastic Processes and their Applications}, 130(8):4873--4891.

\bibitem[Creasey and Lang, 2018]{creasey2018fast}
Creasey, P.~E. and Lang, A. (2018).
\newblock Fast generation of isotropic {G}aussian random fields on the sphere.
\newblock {\em Monte Carlo Methods and Applications}, 24(1):1--11.

\bibitem[Cuevas et~al., 2020]{cuevas2020fast}
Cuevas, F., Allard, D., and Porcu, E. (2020).
\newblock Fast and exact simulation of {G}aussian random fields defined on the
  sphere cross time.
\newblock {\em Statistics and Computing}, 30(1):187--194.

\bibitem[Emery et~al., 2019a]{emery2019turning}
Emery, X., Furrer, R., and Porcu, E. (2019a).
\newblock A turning bands method for simulating isotropic {G}aussian random
  fields on the sphere.
\newblock {\em Statistics \& Probability Letters}, 144:9--15.

\bibitem[Emery and Porcu, 2019]{emery2019simulating}
Emery, X. and Porcu, E. (2019).
\newblock Simulating isotropic vector-valued gaussian random fields on the
  sphere through finite harmonics approximations.
\newblock {\em Stochastic Environmental Research and Risk Assessment},
  33(8-9):1659--1667.

\bibitem[Emery et~al., 2019b]{Emery2019}
Emery, X., Porcu, E., and Bissiri, P.~G. (2019b).
\newblock A semiparametric class of axially symmetric random fields on the
  sphere.
\newblock {\em Stochastic Environmental Research and Risk Assessment},
  33(10):1863--1874.

\bibitem[Gneiting, 2013]{gneiting2013strictly}
Gneiting, T. (2013).
\newblock Strictly and non-strictly positive definite functions on spheres.
\newblock {\em Bernoulli}, 19(4):1327--1349.

\bibitem[Guinness and Fuentes, 2016]{guinness2016isotropic}
Guinness, J. and Fuentes, M. (2016).
\newblock Isotropic covariance functions on spheres: Some properties and
  modeling considerations.
\newblock {\em Journal of Multivariate Analysis}, 143:143--152.

\bibitem[Hansen et~al., 2015]{hansen2015gaussian}
Hansen, L.~V., Thorarinsdottir, T.~L., Ovcharov, E., Gneiting, T., and
  Richards, D. (2015).
\newblock Gaussian random particles with flexible {H}ausdorff dimension.
\newblock {\em Advances in Applied Probability}, 47(2):307--327.

\bibitem[Hitczenko and Stein, 2012]{hitczenko2012some}
Hitczenko, M. and Stein, M.~L. (2012).
\newblock Some theory for anisotropic processes on the sphere.
\newblock {\em Statistical Methodology}, 9(1-2):211--227.

\bibitem[Huang et~al., 2012]{huang2012simplified}
Huang, C., Zhang, H., and Robeson, S.~M. (2012).
\newblock A simplified representation of the covariance structure of axially
  symmetric processes on the sphere.
\newblock {\em Statistics \& Probability Letters}, 82(7):1346--1351.

\bibitem[Jeong et~al., 2017]{jeong2017spherical}
Jeong, J., Jun, M., and Genton, M.~G. (2017).
\newblock Spherical process models for global spatial statistics.
\newblock {\em Statistical Science}, 32(4):501--513.

\bibitem[Jones, 1963]{jones1963stochastic}
Jones, R.~H. (1963).
\newblock Stochastic processes on a sphere.
\newblock {\em The Annals of Mathematical Statistics}, 34(1):213--218.

\bibitem[Jun, 2011]{jun2011non}
Jun, M. (2011).
\newblock Non-stationary {C}ross-{C}ovariance {M}odels for {M}ultivariate
  {P}rocesses on a {G}lobe.
\newblock {\em Scandinavian Journal of Statistics}, 38(4):726--747.

\bibitem[Jun and Stein, 2008]{jun2008nonstationary}
Jun, M. and Stein, M.~L. (2008).
\newblock Nonstationary covariance models for global data.
\newblock {\em The Annals of Applied Statistics}, 2(4):1271--1289.

\bibitem[Kerkyacharian et~al., 2018]{kerkyacharian2018regularity}
Kerkyacharian, G., Ogawa, S., Petrushev, P., and Picard, D. (2018).
\newblock Regularity of {G}aussian processes on {D}irichlet spaces.
\newblock {\em Constructive Approximation}, 47(2):277--320.

\bibitem[Lang and Schwab, 2015]{lang2015isotropic}
Lang, A. and Schwab, C. (2015).
\newblock Isotropic {G}aussian random fields on the sphere: {R}egularity, fast
  simulation and stochastic partial differential equations.
\newblock {\em The Annals of Applied Probability}, 25(6):3047--3094.

\bibitem[Lantu{\'e}joul et~al., 2019]{lantuejoul2019}
Lantu{\'e}joul, C., Freulon, X., and Renard, D. (2019).
\newblock Spectral simulation of isotropic {G}aussian random fields on a
  sphere.
\newblock {\em Mathematical Geosciences}, 51(8):999--1020.

\bibitem[Leonenko et~al., 2018]{leonenko2018estimation}
Leonenko, N.~N., Taqqu, M.~S., and Terdik, G.~H. (2018).
\newblock Estimation of the covariance function of {G}aussian isotropic random
  fields on spheres, related {R}osenblatt-type distributions and the cosmic
  variance problem.
\newblock {\em Electronic Journal of Statistics}, 12(2):3114--3146.

\bibitem[Ma, 2012]{ma2012stationary}
Ma, C. (2012).
\newblock Stationary and isotropic vector random fields on spheres.
\newblock {\em Mathematical Geosciences}, 44(6):765--778.

\bibitem[Marinucci and Peccati, 2011]{marinucci2011random}
Marinucci, D. and Peccati, G. (2011).
\newblock {\em Random {F}ields on the {S}phere: {R}epresentation, {L}imit
  {T}heorems and {C}osmological {A}pplications}.
\newblock Cambridge University Press, Cambridge.

\bibitem[Peron et~al., 2018]{peron2018admissible}
Peron, A., Porcu, E., and Emery, X. (2018).
\newblock Admissible nested covariance models over spheres cross time.
\newblock {\em Stochastic Environmental Research and Risk Assessment},
  32(11):3053--3066.

\bibitem[Porcu et~al., 2018]{porcu2018modeling}
Porcu, E., Alegria, A., and Furrer, R. (2018).
\newblock Modeling temporally evolving and spatially globally dependent data.
\newblock {\em International Statistical Review}, 86(2):344--377.

\bibitem[Porcu et~al., 2019]{porcu2019axially}
Porcu, E., Castruccio, S., Alegr{\'\i}a, A., and Crippa, P. (2019).
\newblock Axially symmetric models for global data: A journey between
  geostatistics and stochastic generators.
\newblock {\em Environmetrics}, 30(1):e2555.

\bibitem[Schoenberg, 1942]{schoenberg1942}
Schoenberg, I.~J. (1942).
\newblock Positive definite functions on spheres.
\newblock {\em Duke Math. J.}, 9(1):96--108.

\bibitem[Siegel, 1955]{doi:10.1002/sapm195534143}
Siegel, K.~M. (1955).
\newblock Bounds of the {L}egendre {F}unction.
\newblock {\em Journal of Mathematics and Physics}, 34(1-4):43--49.

\bibitem[Stein, 2007]{stein2007spatial}
Stein, M.~L. (2007).
\newblock Spatial variation of total column ozone on a global scale.
\newblock {\em The Annals of Applied Statistics}, 1(1):191--210.

\bibitem[Terdik, 2015]{terdik2015angular}
Terdik, G. (2015).
\newblock Angular spectra for non-{G}aussian isotropic fields.
\newblock {\em Brazilian Journal of Probability and Statistics},
  29(4):833--865.

\bibitem[Vanlengenberg et~al., 2019]{vanlengenberg2019data}
Vanlengenberg, C.~D., Wang, W., and Zhang, H. (2019).
\newblock Data generation for axially symmetric processes on the sphere.
\newblock {\em Communications in Statistics-Simulation and Computation}, In
  Press.

\end{thebibliography}

\end{document}